\date{October 7, 2009}
\newtheorem{theorem}{Theorem}[section]
\newtheorem{lemma}[theorem]{Lemma}
\newtheorem{proposition}[theorem]{Proposition}
\newtheorem{definition}[theorem]{Definition}
\newtheorem{conjecture}[theorem]{Conjecture}
\theoremstyle{remark}
\newtheorem{remark}[theorem]{Remark}
\newcommand{\la}{\langle}
\newcommand{\ra}{\rangle}
\newcommand{\inc}{\hookrightarrow}
\newcommand{\bd}{\partial}
\newcommand{\x}{\times}
\newcommand{\Vol}{\operatorname{Vol}}
\newcommand{\Int}{\operatorname{Int}}
\newcommand{\id}{\operatorname{id}}
\newcommand{\cA}{{\mathcal A}}
\newcommand{\cC}{{\mathcal C}}
\newcommand{\cM}{{\mathcal M}}
\newcommand{\cL}{{\mathcal L}}
\newcommand{\QQ}{{\mathbb Q}}
\newcommand{\RR}{{\mathbb R}}
\newcommand{\TT}{{\mathbb T}}
\newcommand{\ZZ}{{\mathbb Z}}
\renewcommand{\a}{\alpha}
\newcommand{\g}{\gamma}
\renewcommand{\t}{\tau}
\newcommand{\G}{\Gamma}
\begin{document}

\title{Ergodic solenoidal homology: Realization theorem}

\subjclass[2000]{Primary: 37A99. Secondary: 58A25, 57R95, 55N45.} \keywords{Real homology,
Ruelle-Sullivan current, solenoid, ergodic theory.}

\author[V. Mu\~{n}oz]{Vicente Mu\~{n}oz}
\address{Instituto de Ciencias Matem\'aticas
CSIC-UAM-UC3M-UCM, Serrano 113 bis, 28006 Madrid, Spain}

\address{Facultad de
Matem\'aticas, Universidad Complutense de Madrid, Plaza de Ciencias
3, 28040 Madrid, Spain}

\email{vicente.munoz@imaff.cfmac.csic.es}

\author[R. P\'{e}rez Marco]{Ricardo P\'{e}rez Marco}
\address{CNRS, LAGA UMR 7539, Universit\'e Paris XIII, %}
99, Avenue J.-B. Cl\'ement, 93430-Villetaneuse, France}

%\address{Departamento de Matem\'aticas, Universidad Carlos III de Madrid,
%28911 Legan\'es, Madrid}

\email{ricardo@math.univ-paris13.fr}

\thanks{Partially supported through Spanish MEC grant MTM2007-63582.
Second author supported by CNRS (UMR 7539).}

\maketitle

\begin{abstract}
We define generalized currents associated with immersions of
abstract oriented solenoids with a transversal measure. We realize
geometrically the full real homology of a compact manifold with
these generalized currents, and more precisely with immersions of
minimal uniquely ergodic solenoids. This makes precise and geometric
De Rham's realization of the real homology by only using a
restricted geometric subclass of currents.
\end{abstract}

%%%%%%%%%%%%%%%%%%%%%%%%%%%%%%%%%%%%%%%%%%%%%%%%%%%%%%%%%%%%%%%%%%
\section{Introduction} \label{sec:introduction}
%%%%%%%%%%%%%%%%%%%%%%%%%%%%%%%%%%%%%%%%%%%%%%%%%%%%%%%%%%%%%%%%%%

We consider a smooth compact connected oriented manifold $M$ of
dimension $n\geq 1$. Any closed oriented submanifold $N\subset M$
of dimension $0\leq k\leq n$ determines a homology class in
$H_k(M, \ZZ)$. This homology class in $H_k(M,\RR)$, as dual of De
Rham cohomology, is explicitly given by integration of the
restriction to $N$ of differential $k$-forms on $M$. Also, any
immersion $f:N \to M$ defines an integer homology class in a
similar way by integration of pull-backs of $k$-forms.
Unfortunately, because of topological reasons dating back to Thom
\cite{Thom1} \cite{Thom2}, not all integer homology classes in
$H_k(M,\ZZ )$ can be realized in such a way. Geometrically, we can
realize any class in $H_k(M, \ZZ)$ by topological $k$-chains. The
real homology $H_k(M,\RR)$ classes are only realized by formal
combinations with real coefficients of $k$-cells. This is not
satisfactory for various reasons. In particular, for diverse
purposes it is important to have an explicit realization, as
geometric as possible, of real homology classes.

\medskip

The first contribution in this direction came in 1957 from the work
of S. Schwartzman \cite{Sc}. Schwartzman showed how, by a limiting
procedure, one-dimensional curves embedded in $M$ can define a real
homology class in $H_1(M,\RR)$. More precisely, he proved that this
happens for almost all curves solutions to a differential equation
admitting an invariant ergodic probability measure. Schwartzman's
idea is very natural. It consists on integrating $1$-forms over
large pieces of the parametrized curve and normalizing this integral
by the length of the parametrization. Under suitable conditions, the
limit exists and defines an element  of the dual of $H^1(M,\RR )$,
i.e. an element of $H_1(M, \RR )$. This procedure is equivalent to
the more geometric one of closing large pieces of the curve by
relatively short closing paths. The closed curve obtained defines an
integer homology class. The normalization by the length of the
parameter range provides a class in $H_k(M,\RR )$. Under suitable
hypothesis, there exists a unique limit in real homology when the
pieces exhaust the parametrized curve, and this limit is independent
of the closing procedure. In the article \cite{MPM3}, we study
the different aspects of the Schwartzman procedure, that we extend
to higher dimension.

\medskip

Later in 1975, D. Ruelle and D. Sullivan \cite{RS} defined, for
arbitrary dimension $0\leq k\leq n$, geometric currents by using
oriented $k$-laminations embedded in $M$ and endowed with a
transversal measure. They applied their results to stable and unstable
laminations of Axiom A diffeomorphisms. In a later article Sullivan \cite{Su} extended
further these results and their applications. The point of view of
Ruelle and Sullivan is also based on duality. The observation is
that $k$-forms can be integrated on each leaf of the lamination and
then all over the lamination using the transversal measure. This
makes sense locally in each flow-box, and then it can be extended
globally by using a partition of unity.  The result only depends on
the cohomology class of the $k$-form. In \cite{MPM2}
we review and extend Ruelle-Sullivan theory.

\medskip

%A first result, with
%a precedent in \cite{HM}, confirms that this is not the case (see
%section \ref{sec:Ruelle-Sullivan}).
%
%\begin{theorem} \label{thm:11}
%Homology classes with non-zero self-intersection cannot be
%represented by Ruelle-Sullivan currents with no compact leaves.
%\end{theorem}
%
%
%More precisely, for each Ruelle-Sullivan lamination with a
%non-atomic transversal measure, we can construct a smooth
%$(n-k)$-form which provides the dual in De Rham cohomology. Using
%it, we prove that the self-intersection of a Ruelle-Sullivan current
%is $0$, therefore it is not possible to represent a real homology
%class in $H_k(M,\RR )$ with non-zero self-intersection. This
%obstruction only exists when $n-k$ is even. This may be the
%historical reason behind the lack of results on the representation
%of an arbitrary homology class by Ruelle-Sullivan currents.
%
%
%\medskip
%
%Therefore, in order to hope to  represent every real homology class
%we must first enlarge the class of Ruelle-Sullivan currents.

It is natural to ask whether it is possible to realize every real
homology class using a topologically minimal (i.e. all leaves dense) 
Ruelle-Sullivan current. In order to achieve
this goal we must enlarge the class of Ruelle-Sullivan currents by
considering immersions of abstract oriented solenoids. We
define a $k$-solenoid to be a Hausdorff compact space foliated by
$k$-dimensional leaves with finite dimensional transversal structure
(see the precise definition in section \ref{sec:minimal}). For these
oriented solenoids we can consider $k$-forms that we can integrate
provided that we are given a transversal measure invariant by the
holonomy group. We define an immersion of a solenoid $S$ into $M$ to
be a regular map $f: S\to M$ that is an immersion in each leaf. If
the solenoid $S$ is endowed with a transversal measure $\mu$, then
any smooth $k$-form in $M$ can be pulled back to $S$ by $f$ and
integrated. The resulting numerical value only depends on the
cohomology class of the $k$-form. Therefore we have defined a closed
current that we denote by $(f,S_\mu )$ and that we call a {\it generalized
current}. This gives a homology class
$[f,S_\mu] \in H_k(M,\RR )$. Our main result is:

\begin{theorem}\textbf{\em (Realization Theorem)} \label{thm:1.2}
Every real homology class in $H_k(M,\RR )$ can be realized by a
generalized current $(f,S_\mu)$ where $S_\mu$ is an oriented,
minimal, uniquely ergodic solenoid.
\end{theorem}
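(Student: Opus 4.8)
The plan is to exploit the linear structure of the set of realizable classes and then to upgrade an arbitrary realization to a minimal, uniquely ergodic one. Write $R \subseteq H_k(M,\RR)$ for the set of classes of the form $[f,S_\mu]$ with $S_\mu$ an oriented $k$-solenoid carrying a holonomy-invariant transversal measure. First I would record that $R$ is stable under the elementary operations on solenoids: rescaling the transversal measure $\mu \mapsto \lambda\mu$ with $\lambda>0$ multiplies the current by $\lambda$, reversing the leafwise orientation sends $[f,S_\mu]$ to $-[f,S_\mu]$, and the disjoint union $(f_1 \sqcup f_2,(S_1)_{\mu_1}\sqcup (S_2)_{\mu_2})$ realizes $[f_1,(S_1)_{\mu_1}]+[f_2,(S_2)_{\mu_2}]$. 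Hence $R$ is a linear subspace of $H_k(M,\RR)$, and it suffices to realize a spanning family together with sums --- the catch being that disjoint unions destroy minimality and must eventually be replaced by a connected construction.

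Second, I would produce an initial, not necessarily minimal, realization of an arbitrary class. By de Rham duality a class $a \in H_k(M,\RR)$ is determined by its periods $\la a,[\omega]\ra$ against closed $k$-forms, and a simplicial or CW decomposition of $M$ represents $a$ by a real $k$-cycle $\sum_i r_i\sigma_i$. Each oriented cell $\sigma_i$, thickened into a flow-box and assembled along the cycle condition, yields a $k$-solenoid immersed in $M$ whose Ruelle-Sullivan current has the prescribed periods, the real weights $r_i$ being absorbed into the transversal measure (cf. \cite{MPM2}). This reduces the theorem to the purely solenoidal statement that any oriented transversally measured $k$-solenoid can be replaced, without changing its class in $H_k(M,\RR)$, by one that is minimal and uniquely ergodic.

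Third, I would carry out this minimalization and rigidification at the level of the solenoid itself. For minimality I would model the solenoid as an inverse limit of branched $k$-manifolds (a flow-box/transition tower) and pass to the closure of a single generic leaf, choosing the transition maps so that every leaf is dense; the transversal measure descends to this minimal set and, since the period computation is an integral over the leaves against the same transversal measure, it is unchanged. For unique ergodicity I would engineer the tower to be of adic (Bratteli-Vershik) type with primitivity and bounded-distortion control on the transition matrices, so that the cone of holonomy-invariant transversal measures collapses to a single ray; this is the solenoidal analogue of the classical criterion behind Schwartzman's ergodic setting \cite{Sc}, and it is compatible with prescribing the class because the surviving measure is exactly the one computing the periods.

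The main obstacle I anticipate is the \emph{simultaneous} control of all three requirements: exactness of the homology class, density of every leaf, and uniqueness of the transversal measure. Unique ergodicity is the delicate one, since it forces a rigid combinatorial structure on the transversal dynamics, whereas hitting a prescribed class exactly (rather than approximately) pushes in the opposite direction. Concretely, the hard technical heart is the gluing/surgery of solenoids needed to realize \emph{sums} of classes inside a single connected minimal uniquely ergodic solenoid --- replacing the disjoint unions of the first step by one solenoid --- while keeping the transition tower primitive enough to retain unique ergodicity and keeping exact track of the resulting period integrals. I expect this surgery, together with the verification that it preserves both the class and the unique-ergodicity estimates, to be where the real work lies.
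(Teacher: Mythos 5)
There are two genuine gaps, and they sit exactly where the theorem's content lies. First, your step of realizing an arbitrary class by ``thickening'' the cells of a simplicial $k$-cycle $\sum_i r_i\sigma_i$ into a solenoid does not work: the leaves of a $k$-solenoid must be $k$-manifolds, and a simplicial cycle has $(k-1)$-faces where several $k$-cells meet with no canonical laminar resolution --- this is precisely the obstruction of Thom that the paper's introduction recalls. The paper instead invokes Thom's theorem (Proposition \ref{prop:appendix-c:thom}, proved in the appendix via maps to spheres, Serre's finiteness of homotopy groups, and the compression theorem) to represent a basis $C_1,\dots,C_{b_k}$ of $H_k(M,\QQ)$ by \emph{immersed submanifolds $S_i$ with trivial normal bundle}; the trivial normal bundle is what permits thickening $S_i$ to $S_i\times U_i$ and inserting a Cantor transversal. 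Second, your minimalization/rigidification step is asserted rather than carried out, and its key claim is false as stated: passing to the closure of a single generic leaf does not preserve the period integrals unless the transversal measure is already ergodic and supported there, and even then one recovers the class of one ergodic component rather than the original class. Unique ergodicity cannot be retrofitted onto an arbitrary realization; it has to be built in from the start.

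The paper's route is to do everything in one connected construction. It takes a Denjoy diffeomorphism $h:\TT\to\TT$ with minimal invariant Cantor set $K$ and unique invariant measure $\mu_K$, writes a positive multiple of $a$ as $\sum\lambda_i C_i$ with $\lambda_i>0$, $\sum\lambda_i=1$, partitions $K$ into compact pieces $K_i$ with $\mu_K(K_i)=\lambda_i$, and glues the pieces $S_i'\times K_i$ (each $S_i'$ being $S_i$ with two balls removed) through a single tube $W'\cong[0,1]\times S^{k-1}\times K$ whose holonomy is $h|_K$. Minimality and unique ergodicity of the resulting solenoid are inherited directly from $h|_K$, the weights $\lambda_i$ are encoded in the measure of the $K_i$, and the sum of classes is realized by routing every leaf through all the $S_i$ rather than by disjoint union. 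The tube is a trapping region mapped into a small ball $B$, so test forms may be taken to vanish there and the pairing localizes to $\sum_i\lambda_i\la C_i,[\alpha]\ra=\la a,[\alpha]\ra$. Your instinct that the connected-sum surgery compatible with unique ergodicity is ``where the real work lies'' is exactly right; the Denjoy example is the device that makes it possible, and without it (or an equivalent uniquely ergodic Cantor dynamics) the proposal does not close.
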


Minimal and uniquely ergodic solenoids are defined later on. This
result strengths De Rham's realization theorem of homology classes by abstract currents, i.e. forms with coefficients distributions. It is a geometric De Rham's Theorem where the abstract currents are replaced by generalized currents that are geometric objects.

\medskip

We can ask why we do need to enlarge the class of Ruelle-Sullivan currents.
The result does not hold for minimal Ruelle-Sullivan currents due to the following observation from \cite{MPM2}
(compare with \cite{HM}).

\begin{proposition} \label{thm:11}
Homology classes with non-zero self-intersection cannot be
represented by Ruelle-Sullivan currents with no compact leaves.
\end{proposition}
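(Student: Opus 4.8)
The plan is to prove the contrapositive: if a class $a\in H_k(M,\RR)$ is represented by a Ruelle--Sullivan current $C$ coming from an embedded oriented $k$-lamination $\cL$ with transversal measure $\mu$ and no compact leaves, then its self-intersection vanishes. Self-intersection is only defined when $2k=n$, which I assume throughout; moreover, if $k$ is odd the cup-product pairing on $H^k(M,\RR)$ is skew-symmetric and $a\cdot a=0$ for free, so the real content is the case $k$ even. Writing $\beta$ for a closed $k$-form representing $\PD(a)\in H^{n-k}(M,\RR)=H^{k}(M,\RR)$, the identity $a\cdot a=\langle \PD(a)\cup\PD(a),[M]\rangle=\int_M\beta\wedge\beta=C(\beta)$ holds, and because $C$ is a closed current ($dC=0$) the value $C(\beta)$ is independent of the chosen representative $\beta$. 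The strategy is thus to pick a representative $\beta$ adapted to $\cL$ and read off the vanishing.

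First I would unwind $C(\beta)=\int\bigl(\int_L\beta|_L\bigr)\,d\mu$, the outer integral being over the transversal with respect to $\mu$ and the inner one the integration of the pulled-back form along each leaf $L$. Choosing $\beta$ to be a Thom form of the normal bundle of the embedded lamination (supported in a tubular neighbourhood of $\cL$), its restriction to a leaf is the leafwise Euler form of the normal bundle, which is precisely the density computing how $\cL$ meets a small transverse push-off of itself. In a flow box $D^k\times T$ the leaves are the parallel slices $D^k\times\{t\}$, and a leaf together with any nearby transversally displaced copy consists of parallel $k$-planes in $\RR^n=\RR^{2k}$: such planes either coincide or are disjoint, and in neither case do they meet transversally. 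Hence the local self-intersection density vanishes identically, and the only possible contributions to $C(\beta)$ are global, arising when a leaf returns near itself under the holonomy and closes up.

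It remains to globalise this local vanishing, and this is the step I expect to be the main obstacle. The point is to show that, after integration against the holonomy-invariant measure $\mu$, every contribution to $a\cdot a$ is supported on compact leaves: a transverse intersection of $\cL$ with a push-off can only be produced where a leaf wraps back onto itself with a transverse twist, which forces that leaf to be closed. Concretely I would argue that an embedded measured lamination with no compact leaves admits a transverse push-off $C'$ disjoint from $C$, so that the signed count defining $a\cdot a$ is empty and $a\cdot a=0$. Establishing the existence of such a disjoint push-off is the delicate part: it requires controlling the return behaviour of the non-compact leaves and using the invariance of $\mu$ under holonomy to rule out any net intersection, rather than merely the local parallelism observed above. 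This is precisely where the hypothesis ``no compact leaves'' enters, and it is consistent with the fact that a single compact leaf, being a closed submanifold, may well have nonzero self-intersection.
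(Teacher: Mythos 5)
Your flow-box observation is the right starting point, but the argument has a genuine gap at exactly the step you flag, and the route you propose for closing it is not the one that works. There are two problems. First, the transversal measure $\mu$ never enters your argument, yet the class $a=[f,S_\mu]$ scales linearly with $\mu$, so ``the Thom form of the normal bundle of the embedded lamination'' cannot represent $\PD(a)$: the correct representative is a $\mu$-weighted average of Thom forms of the individual leaves, and it is only through this averaging that the hypothesis on the leaves can act. Second, the disjoint push-off you ask for need not exist: a Ruelle--Sullivan lamination with no compact leaves may be a foliation whose support is all of $M$ (this is the typical minimal situation the proposition is aimed at), and then there is no room to displace $\cL$ off itself as a set. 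The vanishing is not geometric disjointness of $\cL$ from a perturbation of itself; it is measure-theoretic.

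The paper's own proof is a reduction to Theorem~\ref{thm:self-intersection} (Theorem 7.6 of \cite{MPM2}): for an embedded oriented measured solenoid whose transversal measures have no atoms, $[f,S_\mu]^*\cup[f,S_\mu]^*=0$. The two ingredients your proposal is missing are: (i) no compact leaves implies $\mu$ has no atoms, since an atom propagates under holonomy to infinitely many points of equal positive mass on a transversal met infinitely often by a non-compact leaf, contradicting finiteness of $\mu_T$; and (ii) expanding the cup square with the $\mu$-averaged dual form turns the pairing into a double integral over $T\times T$ of local intersection numbers of pairs of local leaves $(L_y,L_{y'})$; your flow-box computation shows the integrand vanishes off the diagonal $\{y=y'\}$ (parallel local leaves of an embedded lamination are disjoint), and the diagonal is $(\mu\otimes\mu)$-null precisely because $\mu$ is non-atomic. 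This replaces your unproved ``net intersection is ruled out by holonomy invariance'' with a concrete null-set argument. Note finally that the statement actually used in the paper is the cup square $[f,S_\mu]^*\cup[f,S_\mu]^*\in H^{2(n-k)}(M,\RR)$, a nontrivial obstruction whenever $n-k$ is even and $2(n-k)\leq n$, not only in the middle dimension $2k=n$ to which you restrict.
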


Therefore it is not possible to represent a real homology
class in $H_k(M,\RR )$ with non-zero self-intersection by a minimal 
Ruelle-Sullivan current that is not a submanifold. Note that this
obstruction only exists when $n-k$ is even. This may be the
historical reason behind the lack of results on the representation
of an arbitrary homology class by minimal Ruelle-Sullivan currents.

\medskip

The space of solenoids is large, and we would like to realize
the real homology classes by a minimal class of solenoids enjoying
good properties. We are first naturally led to topological
minimality. As we prove in \cite{MPM2}, the spaces of
$k$-solenoids is inductive and therefore there are always minimal
$k$-solenoids. However, the transversal structure and the holonomy
group of minimal solenoids can have a rich structure. In particular,
such a solenoid may have many distinct transversal measures, each
one yielding a different generalized current for the same immersion
$f$. Also when we push Schwartzman ideas beyond $1$-homology for
some nice classes of solenoids, we see that in general, even when
the immersion is an embedding, the generalized current does not
necessarily coincide with the Schwartzman homology class of the
immersion of each leaf (actually not even this Schwartzman class
needs to be well defined). Indeed the classical literature lacks of
information about the precise relation between Ruelle-Sullivan and
Schwartzman currents. One would naturally expect that there is some
relation between the generalized currents and the Schwartzman
current (if defined) of the leaves of the lamination. We study this
problem in \cite{MPM3}.
%section \ref{sec:Schwartzman-cycles}
%for $1$-dimensional
%currents and in [REF]
%\ref{sec:k-schwartzman}
%in general.
The main result is that there is such relation for the class of minimal,
ergodic solenoids with a trapping region.
%\ref{sec:k-schwartzman})
%for which the transversal structure is well
%behaved. 
A solenoid with a trapping region (see definition in section \ref{sec:minimal})
has holonomy group generated by a single
map. Then the bridge between generalized currents and Schwartzman
currents of the leaves is provided by Birkhoff's ergodic theorem. The
main result of \cite{MPM3} is the following.
%sections \ref{sec:Schwartzman-cycles} and
%\ref{sec:k-schwartzman}.

\begin{theorem}\label{thm:1.3}
Let $S_\mu$ be a minimal solenoid endowed with an
ergodic transversal measure $\mu$ and possessing a trapping region $W$.
Let $f: S_\mu \to M$ be an
immersion of $S_\mu$ into $M$ such that $f(W)$ is contained in a ball of $M$.
Then for $\mu$-almost all leaves
$l\subset S_\mu$, the Schwartzman homology class of $f(l)\subset M$
is well defined and coincides with the homology class
$[f,S_\mu]$.

If moreover $S$ is uniquely ergodic, then this happens for {\it all} leaves.
\end{theorem}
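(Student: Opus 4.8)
The plan is to use the trapping region to convert the leafwise dynamics into a single measure-preserving transformation, and then to deduce the equality of the two homology classes from Birkhoff's ergodic theorem. First I would exploit the hypothesis that $W$ is a trapping region: this yields a transversal $\tau\subset W$ together with a first-return map $\mathcal{R}\colon \tau\to\tau$ that generates the holonomy pseudogroup and for which the transversal measure $\mu$ is an invariant probability measure; ergodicity of $\mu$ is precisely ergodicity of $\mathcal{R}$. Since $S$ is minimal, every leaf is dense and hence returns to $W$ infinitely often, so the leaf $l$ through a point $x\in\tau$ is exhausted by a nested sequence $l_0\subset l_1\subset\cdots$, where $l_N$ is the union of the first $N$ cells cut out by successive returns to $W$, and the $i$-th cell depends only on $\mathcal{R}^i(x)$.

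Next I would pass to a closed $k$-form $\omega$ on $M$. Writing $\phi_\omega(x)=\int_{\mathrm{cell}(x)} f^*\omega$, the leafwise integral telescopes into a Birkhoff sum $\int_{l_N} f^*\omega=\sum_{i=0}^{N-1}\phi_\omega(\mathcal{R}^i x)$. The function $\phi_\omega$ lies in $L^1(\mu)$: compactness of $S$ forces the cell volumes to be $\mu$-integrable (a Kac-type finiteness, since $\int_\tau \Vol(\mathrm{cell}(\cdot))\,d\mu$ equals the total leafwise volume of $S$ and is therefore finite), while $f$ being a leafwise immersion makes $f^*\omega$ bounded on cells, so $|\phi_\omega|\le\|f^*\omega\|_\infty\,\Vol(\mathrm{cell}(\cdot))$. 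Birkhoff's theorem then gives, for $\mu$-almost every $x$, both $\frac1N\sum_{i<N}\phi_\omega(\mathcal{R}^i x)\to\int_\tau\phi_\omega\,d\mu$ and $\frac1N\Vol(l_N)\to\int_\tau\Vol(\mathrm{cell})\,d\mu$, whose ratio identifies the volume-normalized Schwartzman limit of the leaf with $\left(\int_\tau\phi_\omega\,d\mu\right)\big/\left(\int_\tau\Vol(\mathrm{cell})\,d\mu\right)$. Because $H^k(M,\RR)$ is finite-dimensional, a single $\mu$-full-measure set of leaves works simultaneously for all $\omega$.

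It remains to match this limit with the generalized current and to confirm that the Schwartzman class is genuinely well defined. Unpacking the definition of $(f,S_\mu)$ over the flow-box $W$, the current pairs with $[\omega]$ by integrating $\phi_\omega$ against the transversal measure, so $\langle[f,S_\mu],[\omega]\rangle=\int_\tau\phi_\omega\,d\mu$; with the compatible normalization built into the two definitions the constants cancel and the classes agree, the essential point being that numerator and denominator are Birkhoff averages of the same ergodic system. To see that the Schwartzman class exists independently of the exhaustion and of the closing procedure I would use the hypothesis $f(W)\subset B$ for a ball $B$: the boundary $\partial l_N$ lies in $f(W)\subset B$, so $l_N$ closes up to a cycle $z_N$ by a single capping chain inside the contractible ball $B$, of bounded size, contributing $O(1)$ and hence killed by the $1/N$ (equivalently $1/\Vol(l_N)$) normalization. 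The same bound shows an arbitrary exhaustion differs from the cellular one by at most one partial cell and bounded closing data, again negligible after normalization, so the limit is the stated homology class for $\mu$-almost every leaf.

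Finally, for the uniquely ergodic case I would replace the $\mu$-almost-everywhere conclusion of Birkhoff's theorem by the uniform convergence of the averages $\frac1N\sum_{i<N}\phi_\omega(\mathcal{R}^i x)$ that characterizes unique ergodicity, namely that every orbit equidistributes for every continuous observable; this removes the exceptional null set and yields the conclusion for all leaves. I expect the main obstacle to be the reconciliation carried out in the third paragraph: controlling the boundary effects — the partial cells cut by an arbitrary leaf exhaustion and the capping chains — so as to prove that the intrinsic volume-normalized Schwartzman limit of the immersed leaf coincides exactly with the clean cellular Birkhoff average, together with establishing the Kac-type integrability that legitimizes the ergodic theorem in the first place.
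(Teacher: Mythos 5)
A preliminary remark: this paper does not contain a proof of Theorem \ref{thm:1.3} at all --- it is quoted as the main result of the companion preprint [MPM3], and the text here only announces the strategy, namely that the trapping region reduces the holonomy to a single map and that ``the bridge between generalized currents and Schwartzman currents of the leaves is provided by Birkhoff's ergodic theorem.'' Your proposal implements exactly that announced strategy: the cells cut out by $\pi^{-1}(\{0\})$ (condition (5) of Definition \ref{def:trapping} gives precisely the two-point attachment you need), the observable $\phi_\omega$ on the transversal, the Kac-type identity $\int_T \Vol(\mathrm{cell})\,d\mu_T=\Vol_\mu(S)<\infty$ legitimizing $L^1$, Birkhoff for the $\mu$-a.e.\ statement, and the uniform convergence of ergodic averages of continuous observables for the uniquely ergodic upgrade. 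All of that is sound and is surely the skeleton of the proof in [MPM3].

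Two points would not survive as written. First, the claim that ``an arbitrary exhaustion differs from the cellular one by at most one partial cell'' is false for $k\geq 2$: Definition \ref{def:regular-asymptotic-k-cycles} quantifies over \emph{all} exhaustions $(U_n)$ and all closings $\Gamma_n$ with $\Vol(\Gamma_n)/\Vol(N_n)\to 0$, and an admissible $U_n$ (which need not be connected, and even if connected can be a union of full cells with many proper subregions of further cells) may meet a number of cells properly that grows with $n$. One must actually prove that the total volume of the partially occupied cells is $o(\Vol(U_n))$, or that the homology classes of the ``difference cycles'' (partial cell piece capped inside $\Gamma_n$) contribute $o(\Vol(N_n))$ after summation; this is the real content of the well-definedness assertion and cannot be waved away by the $O(1)$ capping argument, which only handles the two end-necks of a genuinely cellular exhaustion. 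Second, the identification $\bigl(\int_T\phi_\omega\,d\mu_T\bigr)\big/\bigl(\int_T\Vol(\mathrm{cell})\,d\mu_T\bigr)=\la [f,S_\mu],[\omega]\ra$ requires a choice of scale: as the paper notes, transversal measures have no natural normalization, so the Schwartzman class (which is volume-normalized) equals $[f,S_\mu]$ only after normalizing $\mu$ so that $\int_T\Vol(\mathrm{cell})\,d\mu_T=1$; otherwise the two classes agree only up to that positive scalar. You correctly flagged the first issue as the crux --- it is, and it is what the deferred proof in [MPM3] has to supply.
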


(We recall the definition of Schwartzman homology class and trapping region in
section \ref{sec:minimal}.)

The solenoids constructed for the proof of the Realization Theorem do satisfy the hypothesis 
of this theorem and the transversal measure is unique, that is, the solenoids are uniquely 
ergodic.

\bigskip

{\bf Solenoidal Hodge Conjecture.}

\bigskip

The Hodge Conjecture is an statement about the geometric realization of an integral
class of pure type $(p,p)$ in a complex (projective) manifold. If we drop the condition
of the class being integral, then theorem \ref{thm:1.2} suggests a natural
conjecture for {\it real} homology classes of pure type as follows.

For a compact K\"ahler manifold $M$ of complex dimension $n$, a
complex immersed solenoid $f:S_\mu\to M$ (that is, a solenoid
where the images $f(l)$ of the leaves $l\subset S_\mu$ are complex
immersed submanifolds), of dimension $k=2(n-p)$, defines a class
in $H_{n-p,n-p}(M)=H^{p,p}(M)^*\subset H_k(M,\RR)$, as proved
in proposition 7.8 of \cite{MPM2}. It is natural
to formulate the following conjecture:

\begin{conjecture}\textbf{\em (Solenoidal Hodge Conjecture)}
\label{conj:SHC} Let $M$ be a compact K\"ahler manifold. Then any
class in $H^{p,p}(M)$ is represented by a complex immersed
solenoid of dimension $k=2(n-p)$.
\end{conjecture}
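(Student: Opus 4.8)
The plan is to transport the construction behind the Realization Theorem (Theorem \ref{thm:1.2}) into the holomorphic category, the decisive new requirement being that the leaves of the solenoid map to complex immersed submanifolds, so that the associated generalized current is not merely closed but of pure bidimension $(n-p,n-p)$ with complex tangent planes.

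I would begin with the cohomological datum, a class $\eta \in H^{p,p}(M)$, and pass to its Poincar\'e dual $\alpha \in H_{n-p,n-p}(M) = H^{p,p}(M)^* \subset H_{2(n-p)}(M,\RR)$, which is the class that a complex solenoid of dimension $k = 2(n-p)$ actually produces. The task is then to realize $\alpha$ by a generalized current $(f,S_\mu)$ whose leaf tangent spaces are complex $(n-p)$-dimensional subspaces of $TM$; equivalently, the current should be a closed current of bidegree $(p,p)$ directed by a field of complex planes. The real Realization Theorem already yields some minimal uniquely ergodic $f:S_\mu\to M$ with $[f,S_\mu]=\alpha$, but nothing forces its leaves to be complex, so the content of the conjecture lies entirely in the holomorphy constraint.

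The natural bridge is the theory of closed positive currents together with their laminar structure. Concretely, I would first seek, in the class $\alpha$, a closed positive current of bidimension $(n-p,n-p)$ that is \emph{laminar}, i.e.\ obtained by integrating over a measured family of immersed complex $(n-p)$-discs; Demailly-type regularization and structure theorems are the analytic tools here. One would then reorganize such a laminar current as a generalized current of an abstract complex solenoid $S$, whose flow-box charts are products of a transversal Cantor-like factor with a complex $(n-p)$-disc, the holonomy preserving the transversal measure $\mu$ coming from the mass distribution of the current. Verifying that this object is a genuine solenoid in the sense of section \ref{sec:minimal} --- compact, with finite-dimensional transversal structure and a leafwise holomorphic immersion --- and that the resulting homology class is $\alpha$, is largely bookkeeping once the laminar representative exists.

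The hard part, and the reason this remains a conjecture rather than a theorem, is exactly the rigidity of holomorphicity. In the real case one enjoys enormous freedom --- suspensions, flows, closing up long orbits --- to force minimality and unique ergodicity, and none of these operations respects the complex structure, since a generic perturbation of a complex disc ceases to be complex. Producing complex leaves amounts to solving, at least asymptotically, a Cauchy--Riemann problem while simultaneously controlling the transversal measure so that holonomy-invariance and the homology computation survive. Equivalently, one must guarantee that a general type-$(p,p)$ class supports a nontrivial positive current laminated by complex discs in its cohomology class, an existence statement that for arbitrary compact K\"ahler $M$ and intermediate $p$ is as deep as the analytic heart of the Hodge-type problems it is designed to model. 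I would expect the extreme cases $p=1$, where positive $(1,1)$-currents are plentiful, and $p=n-1$, governed by complex curves, to be the first accessible, with the intermediate range requiring genuinely new ideas.
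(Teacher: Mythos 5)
This statement is not proved in the paper: it is stated and labeled as an open conjecture, and the only evidence the authors offer is the remark that it holds for complex tori (via non-minimal complex solenoids) and that the integral Hodge conjecture fails for general K\"ahler manifolds. Your submission is therefore not comparable to a proof in the paper, and, to your credit, it does not claim to be one --- you explicitly identify the holomorphy constraint as the reason the statement ``remains a conjecture rather than a theorem.'' As a proof it has a fatal gap, and you have named it yourself: the existence, in an arbitrary class of $H_{n-p,n-p}(M)$ dual to a $(p,p)$ class, of a closed current directed by complex $(n-p)$-planes and organized as a measured lamination. Nothing in the paper's machinery (suspensions of Denjoy diffeomorphisms, Thom-type representation of integral classes by submanifolds, thickening along paths through a trapping box) survives the passage to the holomorphic category, exactly as you observe, so the real Realization Theorem contributes nothing beyond the target class $\alpha$ itself.

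Two further cautions on the strategy you sketch. First, not every class in $H^{p,p}(M)$ of a compact K\"ahler manifold contains a closed \emph{positive} current (positivity is a genuine restriction, singling out the pseudoeffective cone when $p=1$), so a proof cannot begin by choosing a positive representative; you would at best write $\alpha$ as a difference of classes admitting positive representatives, and then the laminar structures of the two pieces need not combine into a single solenoid. Second, the reduction you call ``largely bookkeeping'' is not: a laminar current in the sense used in complex dynamics is an increasing limit of currents given by measured families of disjoint discs, and such a family need not close up into a \emph{compact} laminated space with finite-dimensional transversal structure and a holonomy-invariant transversal measure, which is what Definition \ref{def:Ruelle-Sullivan} requires before one can even speak of the generalized current $(f,S_\mu)$. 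So even granting a laminar representative, passing from it to an immersed measured solenoid is a second open problem, not a formality. Your assessment of where the difficulty lies is sound, but what you have written is a research program, not a proof, and should be presented as such.
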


Note that the standard Hodge Conjecture is stated for projective
complex manifolds, since it fails for K\"ahler manifolds
\cite{Zu}. The counterexamples of \cite{Zu} are non-algebraic
complex tori. It is easy to see that conjecture \ref{conj:SHC}
holds for complex tori (using non-minimal complex solenoids).

\noindent \textbf{Acknowledgements.} \
The authors are grateful to Alberto Candel, Etienne Ghys, Nessim Sibony,
Dennis Sullivan
and Jaume Amor\'os
for their comments and interest on this work. In particular, Etienne
Ghys early pointed out on the impossibility of realization in
general of integer homology classes by embedded manifolds.

The first author wishes to acknowledge Universidad Complutense de
Madrid and Institute for Advanced Study at Princeton for their
hospitality and for providing excellent working conditions.  The second author
thanks Jean Bourgain and the IAS at Princeton for their hospitality
and facilitating the collaboration of both authors.

%%%%%%%%%%%%%%%%%%%%%%%%%%%%%%%%%%%%%%%%%%%%%%%%%%%%%%%%%%%%%%%%%%
\section{Solenoids and generalized currents} \label{sec:minimal}
%%%%%%%%%%%%%%%%%%%%%%%%%%%%%%%%%%%%%%%%%%%%%%%%%%%%%%%%%%%%%%%%%%

Let us review the main concepts introduced in \cite{MPM2}.

\begin{definition}\label{def:k-solenoid} A
$k$-solenoid, where $k\geq 0$, of class $C^{r,s}$, is a compact Hausdorff space endowed with an atlas
of flow-boxes $\cA=\{ (U_i,\varphi_i)\}$,
 $$
 \varphi_i:U_i\to D^k\x K(U_i)\, ,
 $$
where $D^k$ is the $k$-dimensional open ball, and $K(U_i)\subset \RR^l$ is the transversal
set of the flow-box. The changes of charts $\varphi_{ij}=\varphi_i\circ
\varphi_j^{-1}$ are of the form
 \begin{equation}\label{eqn:change-of-charts}
 \varphi_{ij}(x,y)=(X(x,y), Y(y))\, ,
 \end{equation}
where $X(x,y)$ is of class $C^{r,s}$ and $Y(y)$ is of class $C^s$.
\end{definition}

Let $S$ be a $k$-solenoid, and $U\cong D^k \x K(U)$ be a flow-box for $S$. The sets
$L_y= D^k\x \{y\}$ are called the (local) leaves of the flow-box. A leaf $l\subset S$ of the
solenoid is a connected $k$-dimensional manifold whose intersection with any flow-box
is a collection of local leaves. The solenoid is oriented if the leaves are oriented
(in a transversally continuous way).

A transversal for $S$ is a subset $T$ which is a finite union of transversals of flow-boxes.
Given two local transversals $T_1$ and $T_2$ and
a path contained in a leaf from a point of $T_1$ to a point of $T_2$,
there is a well-defined holonomy map $h:T_1\to T_2$. The holonomy maps form a pseudo-group.

A $k$-solenoid $S$ is minimal if it does not contain a proper sub-solenoid. By \cite[section 2]{MPM2},
minimal sub-solenoids do exist in any solenoid. If $S$ is minimal, then any transversal is a global transversal, i.e., it
intersects all leaves. In the special case of an oriented minimal $1$-solenoid, the holonomy
return map associated to a local transversal,
 $$
 R_T:T \to T
 $$
is known as the Poincar\'e return map (see \cite[Section 4]{MPM2}).

\begin{definition} \label{def:transversal-measure}
Let $S$ be a $k$-solenoid. A transversal measure $\mu=(\mu_T)$ for
$S$ associates to any local transversal $T$ a locally finite measure
$\mu_T$ supported on $T$, which are invariant by the holonomy
pseudogroup, i.e. if $h : T_1 \to T_2$ is a holonomy map, then
$h_* \mu_{T_1}= \mu_{T_2}$.
\end{definition}

We denote by $S_\mu$ a $k$-solenoid $S$ endowed with a transversal
measure $\mu=(\mu_T)$. We refer to $S_\mu$ as a measured solenoid.
Observe that for any transversal measure $\mu=(\mu_T)$ the scalar
multiple $c\, \mu=(c \, \mu_T)$, where $c>0$, is also a transversal
measure. Notice that there is no natural scalar normalization of
transversal measures.

\begin{definition} \label{def:transversal-ergodicity}\textbf{\em (Transverse ergodicity)}
A transversal measure $\mu=(\mu_T )$ on a solenoid $S$ is ergodic if for any Borel set
$A\subset T$ invariant by the pseudo-group of holonomy maps on $T$,
we have
 $$
 \mu_T(A) = 0 \ \ {\hbox{\rm{ or }}} \ \ \mu_T(A) = \mu_T(T) \, .
 $$
We say that $S_\mu$ is an ergodic solenoid.
\end{definition}

\begin{definition} \label{def:transversal-unique-ergodicity}
Let $S$ be a $k$-solenoid. The solenoid $S$ is
uniquely ergodic if it has a unique (up to scalars)
transversal measure $\mu$ and its support is the whole of $S$.
\end{definition}

\bigskip

Now let $M$ be a smooth manifold of dimension $n$. An immersion of a $k$-solenoid
$S$ into $M$, with $k<n$, is a smooth map $f:S\to M$ such that the differential
restricted to the tangent spaces of leaves has rank $k$ at every
point of $S$. The solenoid $f:S\to M$ is
transversally immersed if for any flow-box $U\subset S$ and chart
$V\subset M$, the map $f:U= D^k\x K(U) \to V\subset \RR^n$ is
an embedding, and the images of
the leaves intersect transversally in $M$. If moreover $f$ is injective, then
we say that the solenoid is embedded.

Note that under a transversal immersion, resp.\ an embedding,
$f:S\to M$, the images of the leaves are immersed, resp.\
injectively immersed, submanifolds.

\bigskip

\begin{definition}\label{def:Ruelle-Sullivan}\textbf{\em (Generalized currents)}
Let $S$ be an oriented  $k$-solenoid of class $C^{r,s}$, $r\geq
1$, endowed with a transversal measure $\mu=(\mu_T)$. An immersion
 $$
 f:S\to M
 $$
defines a current $(f,S_\mu)\in \cC_k(M)$, called generalized Ruelle-Sullivan
current (or just generalized current), as follows.
Let $\omega$ be an $k$-differential form in $M$. The pull-back
$f^* \omega$ defines a $k$-differential form on the leaves of $S$.
Let $S=\bigcup_i S_i$ be a measurable partition such that each
$S_i$ is contained in a flow-box $U_i$.  We define
 $$
 \la (f,S_\mu),\omega \ra=\sum_i \int_{K(U_i)} \left ( \int_{L_y\cap S_i}
 f^* \omega \right ) \ d\mu_{K(U_i)} (y) \, ,
 $$
where $L_y$ denotes the horizontal disk of the flow-box.

The current $(f,S_\mu)$ is closed, hence it defines a
real homology class
 $$
 [f,S_\mu]\in H_k(M,\RR )\, ,
 $$
called Ruelle-Sullivan homology class.
%by duality with differential forms as follows.
\end{definition}

Note that this definition does not depend on the measurable
partition (given two partitions consider the common refinement).
If the support of $f^*\omega$ is contained in a flow-box $U$ then
 $$
 \la (f,S_\mu),\omega \ra =\int_{K(U)} \left ( \int_{L_y} f^* \omega \right )
 \ d\mu_{K(U)} (y) \, .
 $$
In general, take a partition of unity $\{\rho_i\}$ subordinated to
the covering $\{U_i\}$, then
  $$
   \la (f,S_\mu),\omega \ra = \sum_i
   \int_{K(U_i)} \left( \int_{L_y} \rho_i f^* \omega \right)
   d\mu_{K(U_i)} (y) \, .
  $$

Let us see that $(f,S_\mu)$ is closed.
For any exact differential $\omega=d\a$ we have
 $$
 \begin{aligned}
 \la (f,S_\mu),d\a\ra =\, &  \sum_i
 \int_{K(U_i)} \left ( \int_{L_y} \rho_i \, f^* d\a \right )
 \ d\mu_{K(U_i)}(y) \\  =\, &  \sum_i
 \int_{K(U_i)} \left ( \int_{L_y} d (\rho_i f^* \a) \right )
 \ d\mu_{K(U_i)}(y)   \\ & \qquad  -
  \sum_i \int_{K(U_i)} \left ( \int_{L_y} d \rho_i \wedge f^* \a \right )
 \ d\mu_{K(U_i)}(y)    = 0\, . \qquad
 \end{aligned}
 $$
The first term vanishes using Stokes in each leaf (the form $\rho_i f^* \a$
is compactly
supported on $U_i$), and the second term vanishes because $\sum_i d\rho_i\equiv 0$.
% $$
% \la [f,S_\mu],d\a_i\ra =\int_{K(U_i)} \left ( \int_{L_y} f^* d\a_i \right )
% \ d\mu_{K(U_i)}(y) =\int_{K(U_i)} \left ( \int_{L_y} d f^* \a_i
% \right ) \ d\mu_{K(U_i)}(y)=0 \, .
% $$
Therefore $[f, S_\mu]$ is a well defined homology class of degree $k$.

In their original article \cite{RS}, Ruelle and Sullivan defined
this notion for the restricted class of solenoids embedded in $M$.

%
%\begin{definition}\label{def:Ruelle-Sullivan}
%Let $S_\mu$ be an oriented measured $k$-solenoid. An immersion
%$f:S\to M$
%defines a generalized Ruelle-Sullivan current $(S_\mu,f)\in \cC_k(M)$ as follows.
%Let $S=\bigcup_i S_i$ be a measurable partition such that each
%$S_i$ is contained in a flow-box $U_i$. For $\omega\in \Omega^k(M)$, we define
% $$
% \la (S_\mu,f),\omega \ra=\sum_i \int_{K(U_i)} \left ( \int_{L_y\cap S_i}
% f^* \omega \right ) \ d\mu_{K(U_i)} (y) \, ,
% $$
%where $L_y$ denotes the horizontal disk of the flow-box.
%\end{definition}
%
%In \cite{MPM} it is proved that $(S_\mu,f)$ is a closed current. Therefore, it defines
%a real homology class
% $$
%  [S_\mu, f]\in H_k(M,\RR)\, .
%  $$
%In their original article \cite{RS}, Ruelle and Sullivan defined
%this notion for the restricted class of solenoids embedded in $M$.
%

When $M$ is a compact and oriented $n$-manifold, the Ruelle-Sullivan
homology class $[f,S_\mu]\in H_k(M,\RR)$ gives an element
 $$
 [f,S_\mu]^* \in H^{n-k}(M,\RR)\, ,
 $$
under the Poincar\'{e} duality isomorphism $H_k(M,\RR)\cong
H^{n-k}(M,\RR)$.

We have the following result (theorem 7.6 in \cite{MPM2}) which proves Proposition \ref{thm:11}.

\begin{theorem}\label{thm:self-intersection} \textbf{ \em
(Self-intersection of embedded solenoids)} Let $M$ be a compact,
oriented, smooth manifold. Let $f:S_\mu\to M$ be an embedded oriented measured
solenoid, such that the transversal measures
$(\mu_T)$ have no atoms. Then we have
 $$
 [f,S_\mu]^* \cup [f,S_\mu]^*=0
 $$
in $H^{2(n-k)}(M,\RR)$.
\end{theorem}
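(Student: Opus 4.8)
The plan is to reduce the vanishing of the self-cup-product to a statement about the product solenoid $S\x S$, and then to exploit the embeddedness of $f$ together with the absence of atoms. By Poincar\'e duality, the asserted identity $[f,S_\mu]^*\cup[f,S_\mu]^*=0$ in $H^{2(n-k)}(M,\RR)$ is equivalent to the vanishing of the intersection class $[f,S_\mu]\cdot[f,S_\mu]\in H_{2k-n}(M,\RR)$. I would compute this intersection through the diagonal: writing $\Delta\subset M\x M$ for the diagonal submanifold, of codimension $n$, and $\Delta^{!}$ for its Gysin map, one has
$$
[f,S_\mu]\cdot[f,S_\mu]=\Delta^{!}\big([f,S_\mu]\x[f,S_\mu]\big).
$$
First I would check that the exterior product class is realized by the generalized current of the product solenoid, namely $[f,S_\mu]\x[f,S_\mu]=[f\x f,(S\x S)_{\mu\x\mu}]$; this follows from Definition \ref{def:Ruelle-Sullivan} by a Fubini argument, since pairing against a product form factors as a product of leafwise integrals against $\mu$. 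Here $f\x f$ is again an embedding, $S\x S$ is an oriented $2k$-solenoid with transversal $K\x K$, and $\mu\x\mu$ is a holonomy-invariant transversal measure with no atoms.

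The geometric heart is the following observation. Because $f$ is injective, the image $(f\x f)(S\x S)$ meets $\Delta$ exactly along the image of the diagonal sub-locus $\mathrm{diag}(S)=\{(p,p):p\in S\}\subset S\x S$, since $f(p)=f(q)$ forces $p=q$; in particular, for two distinct leaves $L_y$, $L_{y'}$ the disjointness of $f(L_y)$ and $f(L_{y'})$ shows that only the transversal pairs with $y=y'$ can contribute to the intersection. In transversal coordinates this contributing locus sits inside $K\x K$ as the diagonal $\{(y,y):y\in K\}$, and for the product measure it is null: since $\mu$ has no atoms, Fubini gives
$$
(\mu\x\mu)\big(\{(y,y):y\in K\}\big)=\int_{K}\mu(\{y\})\,d\mu(y)=0.
$$
Thus the entire geometric intersection is carried, at the transversal level, by a $(\mu\x\mu)$-null set.

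To turn this into the vanishing of $\Delta^{!}([f,S_\mu]\x[f,S_\mu])$ I would represent the Gysin map by a Thom form $\tau$ of the normal bundle of $\Delta$, a closed $n$-form supported in an $\e$-tubular neighborhood $\cN_\e(\Delta)$ with fiber integral $1$, and realize $\Delta^{!}([f\x f,(S\x S)_{\mu\x\mu}])$ as the generalized current obtained by wedging $(f\x f)^*\tau$ into the leafwise integration over $S\x S$. Fiber-integrating $\tau$ in the $n$ normal directions to $\Delta$ produces, along each diagonal leaf, the Euler form of its normal bundle, a finite leafwise density; but the transversal integration then runs against $\mu\x\mu$, which by the displayed computation assigns mass $0$ to the diagonal of $K\x K$ where all contributions concentrate. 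As $\e\to 0$ the support of the representing current localizes to a shrinking neighborhood of $\mathrm{diag}(S)$, whose transversal measure tends to $0$ while the normal contribution stays bounded, so the mass of the current tends to $0$ and the homology class vanishes. I expect the main obstacle to be exactly this limiting estimate: one must rule out any concentration of $\mu\x\mu$ on the diagonal as the tube shrinks, i.e. show that the bounded normal (Euler-form) contribution cannot compensate the vanishing transversal measure. This is precisely where the no-atoms hypothesis is indispensable, and it is the step that would fail for compact leaves, in accordance with Proposition \ref{thm:11}.
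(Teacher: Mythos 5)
First, a point of reference: this paper does not actually prove Theorem \ref{thm:self-intersection}; it quotes it as Theorem 7.6 of \cite{MPM2} (compare also \cite{HM}), so there is no in-paper proof to match yours against. That said, your strategy is the standard one for this statement and almost certainly the intended one: represent the self-intersection by pairing the generalized current against a Poincar\'e-dual (Thom-type) form supported in a shrinking $\e$-neighborhood, observe that by injectivity of $f$ the contributions concentrate on pairs of local leaves with nearby transversal parameters, and kill the limit with $(\mu\times\mu)\{(y,y'): d(y,y')<\e\}=\int_K\mu(B_\e(y))\,d\mu(y)\to\int_K\mu(\{y\})\,d\mu(y)=0$, which is exactly where the no-atoms hypothesis and the failure for compact leaves (atoms) enter. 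Your reformulation through the product solenoid $S\times S$ and the diagonal Gysin map is a legitimate packaging of this; the identity $[f,S_\mu]\times[f,S_\mu]=[f\times f,(S\times S)_{\mu\times\mu}]$ does follow from Definition \ref{def:Ruelle-Sullivan} by Fubini on product forms plus K\"unneth.

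The one genuine gap is the step you yourself flag: the assertion that ``the normal contribution stays bounded'' while the transversal measure vanishes. As written this is not established, and the naive estimate fails: with $\tau$ an $\e$-Thom form of $\Delta$ one has $\|\tau\|_\infty\sim\e^{-n}$, while the volume of $(L_y\times L_{y'})\cap\cN_\e(\Delta)$ for nearby local leaves is of order $\e^{k}$ (the tube condition constrains only the $k$ leafwise coordinates $x-x'$, the transversal separation $|y-y'|$ being fixed), so the crude bound is $\e^{k-n}\to\infty$ for $k<n$. To close this you must not bound $\tau$ by its sup norm but use its normalization: work in a flow box $D^k\times K$ where, for $|y-y'|\lesssim\e$, the local leaf $f(L_{y'})$ is a section of the normal $\e$-tube over $f(L_y)$; then $L_y\times L_{y'}$ is (up to a bounded graph transformation) a section of the normal disc bundle of $\Delta$ over a bounded piece of $\Delta$, and the leafwise integral of $(f\times f)^*(\tau\wedge\tilde\beta)$ is controlled by the fiber integral of $\tau$ (which is $1$) times $\|\tilde\beta\|_\infty$ times the leaf volume in the box --- a bound uniform in $\e$ and in $(y,y')$. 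One also needs the compactness/injectivity remark that for $\e$ small, $d(f(p),f(q))<\e$ forces $p,q$ to lie on nearby local leaves of a common flow box (so that no ``far'' pairs contribute), which uses that $f$ is a homeomorphism onto its image. With that uniform bound in place, dominated convergence together with your computation $(\mu\times\mu)(\mathrm{diag})=0$ finishes the proof; without it, the argument is a correct outline rather than a proof.
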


This indicates that we cannot use only {\em embedded} solenoids to represent
real homology classes in general.

\bigskip

Now let us recall the notions of Schwartzman theory that we are going
to need, and that are extensively studied in \cite{MPM3}.

Let $M$ be a compact smooth Riemannian manifold.
Given an Riemannian immersion
$c: N \to M$ from an oriented complete smooth manifold $N$ of dimension
$k\geq 1$, we consider exhaustions $(U_n)$ of $N$ with
$U_n\subset N$ being $k$-dimensional compact submanifolds with
boundary $\bd U_n$. We close $U_n$ with a $k$-dimensional oriented
manifold $\G_n$ with boundary $\bd \G_n=-\bd U_n$ (that is, $\bd
U_n$ with opposite orientation, so that $N_n=U_n\cup \G_n$ is a
$k$-dimensional compact oriented manifold without boundary), in such
a way that $c_{|U_n}$ extends to a piecewise %an immersion of
smooth map $c_n:N_n \to M$. We may consider the associated homology
class $[c_n(N_n)]\in H_k(M,\ZZ)$. Suppose that
 $$
 \frac{\Vol_k (c_n(\Gamma_n) )}{\Vol_k (c_n(N_n) )}\to 0\, .
  $$
If the following limit exists,
 \begin{equation}\label{eqn:extra}
 \lim_{n\to +\infty } \frac{1}{\Vol_k (c_n(N_n) )} [c_n(N_n)] \in
 H_k(M,\RR) \, ,
 \end{equation}
we call it a Schwartzman asymptotic $k$-cycle.

\begin{definition}\label{def:regular-asymptotic-k-cycles}
The immersed manifold $c:N\to M$ represents a homology class $a\in
H_k(M,\RR )$ if for all exhaustions $(U_n)$, the class (\ref{eqn:extra})
exists and equals $a$.
We denote $[{c,N}]=a$, and call it the Schwartzman homology class of $(c,N)$.
\end{definition}

For immersed solenoids $f:S\to M$, we may consider the Schwartzman homology
classes associated to its leaves.

\begin{definition} \textbf{\em (Schwartzman representation of homology classes)}
\label{def:representation-Schwartzman-k-solenoid} Let $f:S_\mu\to M$ be an
immersion in $M$ of an oriented measured $k$-solenoid $S$, and give $S$ the
induced Riemannian structure. % induced by thsolenoid with the pull-back metric $f^* g$.
%\begin{enumerate}
%\item
%If $S$ is endowed with a transversal measure
 %$\mu$, %=(\mu_T)\in \cM_\cT (S)$,
The immersed solenoid $f:S_\mu\to M$
% represents a homology class $a\in H_1(M, \RR)$ if for
% $(\mu_T)$-almost all leaves $l\subset S$, we have that $(f,l)$ is a
% Riemann-Schwartzman asymptotic $k$-cycle with $[{f,l}]=a$.
%\item The immersed solenoid $f:S\to M$
fully represents a homology
 class $a\in H_1(M, \RR)$ if for all leaves $l\subset S$,
 we have that $(f,l)$ is a
 Schwartzman asymptotic $k$-cycle with $[{f,l}]=a$.
%\end{enumerate}
\end{definition}

A class of solenoids with good properties are those which have a trapping region,
since for them the holonomy is represented by a single map. The definition is
cumbersome but very natural.

\begin{definition}\textbf{\em (Trapping region)} \label{def:trapping}
An open subset $W\subset S$ of a solenoid $S$ is a trapping region
if there exists a continuous map $\pi : S \to \TT$ such that
\begin{enumerate}
\item[(1)] For some $0<\epsilon_0<1/2$, $W=\pi^{-1} ((-\epsilon_0 ,\epsilon_0))$.

\item[(2)]  There is a global transversal $T\subset \pi^{-1} (\{ 0\} )$.

\item[(3)] Each connected component of $\pi^{-1} (\{ 0\} )$ intersects $T$ in exactly
one point.

\item[(4)] $0$ is a regular value for $\pi$. %, that is, $\pi$ is smooth
%in a neighborhood of $\pi^{-1} (\{ 0\} )$ and it $d\pi$ is surjective
%at each point of $\pi^{-1} (\{ 0\} )$ (the differential $d\pi$ is
%understood leaf-wise).

\item[(5)] For each connected component $L$ of $\pi^{-1} (\TT-\{0\} )$ we
have ${\overline L}\cap T=\{ x,y\}$, where $\{x\} \in {\overline L}\cap T\cap
\pi^{-1} ((-\epsilon_0 ,0])$ and $\{y\} \in {\overline L}\cap T\cap \pi^{-1}
([0,\epsilon_0 ))$. %We define $R_T : T\to T$ by $R_T(x)=y$.
\end{enumerate}

\end{definition}

The main result of \cite{MPM3} is the following theorem.

\begin{theorem} \label{thm:11.12}
Let $S$ be a minimal oriented $k$-solenoid endowed with a transversal uniquely ergodic
measure $\mu\in\cM_\cL(S)$ and with a trapping region $W\subset S$.
Consider an immersion $f:S\to M$ such that $f(W)$ is contained in a
contractible ball in $M$. Then $f:S_\mu \to M$ fully represents its
Ruelle-Sullivan homology class $[f,S_\mu ]$.
\end{theorem}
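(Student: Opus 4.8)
The plan is to reduce the statement to the classical Birkhoff ergodic theorem applied to the Poincar\'e return map of the trapping region, with the contractible ball hypothesis used to dispose of the closing caps. The starting point is the observation recalled before the statement: a trapping region makes the holonomy pseudogroup generated by a single transformation. Concretely, minimality guarantees that $T=\pi^{-1}(\{0\})$ is a global transversal, and condition (5) of Definition \ref{def:trapping} lets me cut every leaf along $T$ into consecutive \emph{fundamental pieces}, the passage from one piece to the next being governed by a single return map $R : T\to T$ (with $R(x)=y$ in the notation of condition (5)). Holonomy invariance of $\mu$ says that $R$ preserves the transverse measure $\mu_T$, and the correspondence between holonomy-invariant transverse measures and $R$-invariant measures on $T$ turns unique ergodicity of $S_\mu$ into unique ergodicity of the measure-preserving system $(T,R,\mu_T)$, after normalizing $\mu_T$ to a probability measure.

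First I would fix a closed $k$-form $\omega$ on $M$ and introduce two continuous functions on $T$: the \emph{period} $\phi_\omega(x)=\int_{P(x)}f^*\omega$, where $P(x)$ is the fundamental piece of leaf running from $x\in T$ to $R(x)\in T$, and the \emph{volume} $\psi(x)=\Vol_k(f(P(x)))$. Both are continuous in $x$ by transversal continuity of the foliation together with smoothness of $f$. Flowing $T$ once around reproduces a measurable partition of $S$ into the pieces $P(x)$, so the defining sum of Definition \ref{def:Ruelle-Sullivan} reorganizes into the single transverse integral $\la (f,S_\mu),\omega\ra=\int_T \phi_\omega\,d\mu_T$; this identification of the Ruelle-Sullivan pairing with a return-map integral is the bridge between the two theories.

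Next, for a fixed leaf $l$ and a base point $x_0\in l\cap T$, an exhaustion $(U_n)$ is, up to a bounded boundary error, a union of consecutive pieces $P(x_0),P(Rx_0),\dots,P(R^{N_n-1}x_0)$. Hence $\int_{U_n}f^*\omega$ equals the Birkhoff sum $\sum_{i=0}^{N_n-1}\phi_\omega(R^i x_0)$ up to bounded error, and $\Vol_k(f(U_n))$ equals $\sum_{i=0}^{N_n-1}\psi(R^i x_0)$ up to bounded error. Since $(T,R,\mu_T)$ is uniquely ergodic, the Birkhoff averages of the continuous functions $\phi_\omega$ and $\psi$ converge uniformly in the base point $x_0$; this is exactly where unique ergodicity upgrades an almost-everywhere statement to one valid for \emph{every} leaf. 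Because $f(W)$ lies in a contractible ball, I may close each fundamental piece inside that ball, so $\Vol_k(f(\Gamma_n))$ stays bounded while $\Vol_k(f(U_n))\to\infty$; this forces the ratio $\Vol_k(f(\Gamma_n))/\Vol_k(f(N_n))\to 0$ required in Definition \ref{def:regular-asymptotic-k-cycles}, and makes the cap contribution $\int_{\Gamma_n}f^*\omega$ negligible after normalization. Dividing and letting $n\to\infty$ gives $\la [f,l],[\omega]\ra=\int_T\phi_\omega\,d\mu_T\big/\int_T\psi\,d\mu_T$, which equals $\la[f,S_\mu],[\omega]\ra$ once $\mu_T$ is normalized so that the $\mu$-total $k$-volume $\int_T\psi\,d\mu_T$ is $1$; as $\omega$ is arbitrary, $[f,l]=[f,S_\mu]$ for all $l$.

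The main obstacle I anticipate is the uniform control of the boundary terms, carried out simultaneously over all leaves. An arbitrary exhaustion $U_n$ need not be a clean union of fundamental pieces: $\bd U_n$ cuts through partial pieces, and I must bound the resulting error in both $\int_{U_n}f^*\omega$ and $\Vol_k(f(U_n))$ independently of $x_0$, so that the limiting ratio is genuinely leaf-independent. This requires combining the uniform convergence furnished by unique ergodicity with geometric estimates on fundamental pieces near $T$, using that $0$ is a regular value of $\pi$ and that the pieces have uniformly bounded geometry under $f$, and verifying that the caps produced by the trapping region really satisfy the small-volume condition. The remaining steps --- identifying the transverse integral with the Ruelle-Sullivan pairing and matching the volume normalization --- are bookkeeping once this suspension picture is in place.
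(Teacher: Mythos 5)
This theorem is not proved in the present paper: it is the main result of the companion paper [MPM3], quoted here together with a one-line indication of the method (``the bridge between generalized currents and Schwartzman currents of the leaves is provided by Birkhoff's ergodic theorem''). Your strategy coincides exactly with that announced route: use conditions (2)--(5) of Definition \ref{def:trapping} to cut each leaf into fundamental pieces governed by a single return map $R:T\to T$, identify $\la (f,S_\mu),\omega\ra$ with the transverse integral of the period function, and invoke unique ergodicity of $(T,R,\mu_T)$ to get uniform (hence every-leaf) convergence of the Birkhoff averages of the period and volume functions, with the contractible ball absorbing the closing caps $\G_n$. The normalization issue you raise at the end is genuine but harmless: transversal measures carry no canonical scale, so ``represents $[f,S_\mu]$'' is only meaningful up to the positive constant $\int_T\psi\,d\mu_T$, exactly as you set it up.

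The one place where your sketch overreaches is the claim that an arbitrary exhaustion $(U_n)$ of a leaf agrees with a union of consecutive fundamental pieces ``up to a bounded boundary error.'' For $k=1$ this is true (a connected exhaustion of a line is an interval, and the error is at most two partial pieces), but Definition \ref{def:regular-asymptotic-k-cycles} quantifies over \emph{all} exhaustions by compact $k$-submanifolds with boundary, and for $k\geq 2$ the boundary $\bd U_n$ can cut through a number of partial pieces that grows with $n$; neither $\int_{U_n}f^*\omega$ nor $\Vol_k(f(U_n))$ then differs from the corresponding Birkhoff sum by an $O(1)$ quantity. What saves the argument is the hypothesis $\Vol_k(f(\G_n))/\Vol_k(f(N_n))\to 0$ together with an isoperimetric-type control relating the volume of the partially covered pieces to $\Vol_{k-1}(\bd U_n)$ and hence to $\Vol_k(\G_n)$; this is the substantive technical content of the proof in [MPM3] and cannot be dismissed as bookkeeping. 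You correctly identify this as the main obstacle, but the proposal as written does not close it, so as it stands it proves full representation only for exhaustions adapted to the trapping region. A smaller point to verify is the continuity of $\phi_\omega$ and $\psi$ on $T$: fundamental pieces over different components of $T$ may have different topology, so continuity must be argued from the transversal continuity of the foliated atlas (on a Cantor transversal the relevant partition is into clopen sets, which makes this work).
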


%%%%%%%%%%%%%%%%%%%%%%%%%%%%%%%%%%%%%%%%%%%%%%%%%%
\section{Realization of $H_1(M,\RR)$ } \label{sec:1-solenoids}
%%%%%%%%%%%%%%%%%%%%%%%%%%%%%%%%%%%%%%%%%%%%%%%%%%

Let $M$ be a $C^\infty$ smooth compact Riemannian manifold. Given a
real $1$-homology class $a\in H_1(M,\RR)$, we want to construct an
immersion $f:S\to M$ in $M$ of a uniquely ergodic solenoid $S_\mu$
%fully representing $a$ up to scalar factor (see definition
%\ref{def:representation-Schwartzman-1-solenoid}), and
with
generalized current $[f,S_\mu]=a$.
%By theorem \ref{thm:11.12}, it is
%enough to construct an immersed, oriented, uniquely ergodic
%$1$-solenoid $f:S_\mu\to M$ with $[f,S_\mu]$ equal to a positive
%multiple $\lambda \, a$ of $a$, since in this case
%$[f,S_{\lambda^{-1}\mu}]=a$. If $\mu$ is the normalized ergodic
%measure, theorem \ref{thm:11.12} implies that $f:S \to M$ fully
%represents $\lambda \, a$. Moreover, if we change the Riemannian
%metric of $M$, the property of fully representing $a$ up to positive
%scalar factor is preserved (although the scalar factor may vary).

In some situations (depending on the dimension) we will achieve an
embedding. Actually the abstract $1$-solenoid $S$ that we will construct is
independent of $a$ and of $M$, and moreover it has a $1$-dimensional
transversal structure.

Let $h:\TT\to \TT$ be a diffeomorphism of the circle with an irrational rotation
number (and therefore uniquely ergodic), which is a Denjoy
counter-example, i.e. has the unique invariant probability measure
supported in the Cantor set $K\subset \TT$. Let $\mu_K$ denote the
invariant probability measure. For the original construction of
Denjoy counter-examples see  \cite{Denjoy}. Actually for any given $\epsilon >0$,
$h$ can be
taken to be of class $C^{2-\epsilon}$ (see
\cite{Herman}).

The suspension of $h$,
  $$
  S_h= ([0,1]\x \TT )_{/ (0,x) \sim (1,h(x))}
  $$
is $C^{2-\epsilon}$-diffeomorphic to the $2$-torus $T^2$. More
explicitly, the diffeomorphism is as follows: take $c>0$ small, let
$h_t$, $t\in [0,c]$, be a (smooth) isotopy from $\id$ to $h$,
%(with $h_t=\id$ for $t\leq 0$ and $h_t=h$ for $t\geq c$),
then we define the diffeomorphism $H:T^2\to S_h$ by
 $$
   H(t,x)=\left\{ \begin{array}{ll} (t,h^{-1}(h_t(x))),
   \qquad &\text{for $t\in [0,c]$\,,} \\
   (t,x), & \text{for $t\in [c,1]$\,.} \end{array} \right.
 $$
Note that $S_h$ is foliated by the horizontal leaves, so $T^2$ is
foliated accordingly. It can be considered also as a $1$-solenoid
of class $C^{\omega,2-\epsilon}$.

The sub-solenoid
  $$
  S= ( [0,1]\x K)_ {/ \sim} \, \subset S_h
  $$
is an oriented $1$-solenoid of class $C^{\omega,2-\epsilon}$, with
transversal $T= (\{0\}\x \TT ) \cap S= \{0\} \x  K$. The holonomy is
given by the map $h$, which is uniquely ergodic. Moreover, the
associated transversal measure is $\mu_K$ on the transversal $K\cong
\{0\}\x K$. So $S$ is an oriented and uniquely ergodic $1$-solenoid.

Using the diffeomorphism $H$, we may see the solenoid $S$ inside
the $2$-torus, $S\subset S_h\cong T^2$, consisting of the paths
$(t,x)$, $x\in K$, $t\in [c,1]$, together with the paths
$(t,h_t(x))$, $x\in K$, $t\in [0,c]$. The embedding $S \inc T^2$
is of class $C^{\omega,2-\epsilon}$, so we shall think of $S$ as
an oriented $1$-solenoid of regularity $C^{\omega,2-\epsilon}$.

\begin{figure}[h]\label{figure1}
\centering
\resizebox{6cm}{!}{\includegraphics{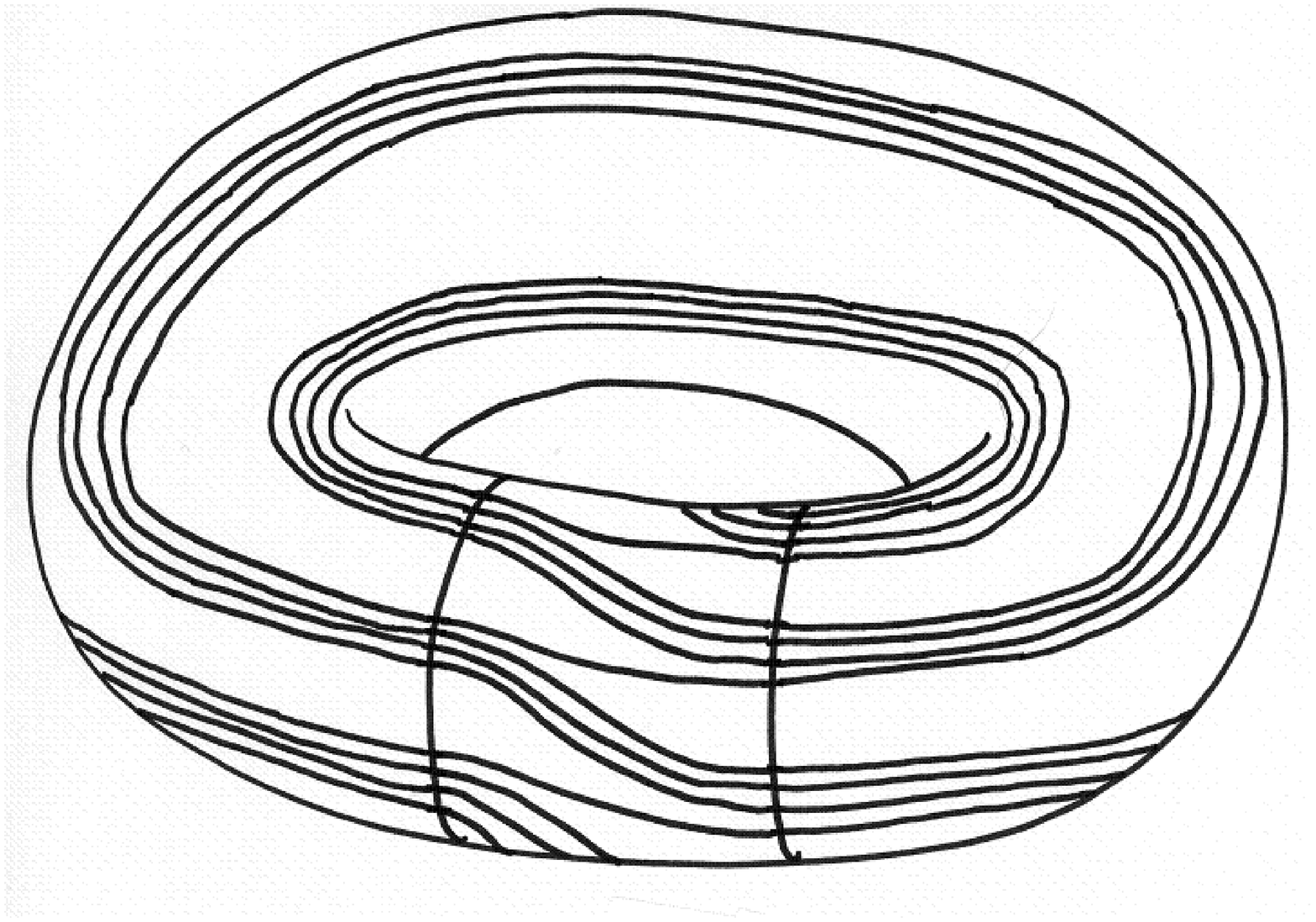}}    % name of the file - without extension
\caption{The $1$-solenoid $S$.}
\end{figure}

%By lemma \ref{lem:mucha-regularidad}, $S$ admits the structure of
%a $C^{\infty,\infty}$-solenoid. The embedding $S\inc T^2$ can be
%isotoped to a $C^{\infty,\infty}$-embedding, by a very small
%perturbation (supported on $([0,c]\x \TT)\cap S$). However, the
%map $S_h\inc T^2$ cannot be given a regularity better than
%$C^{\infty, 2-\epsilon}$.

\begin{theorem}\label{thm:construct-1-solenoid}
Let $M$ be a compact smooth manifold, and let $a\in H_1(M,{\RR})$
be a non-zero $1$-homology class. If $\dim M\geq 3$ then (a
positive multiple of) $a$ can be fully represented by an embedding
(of class $C^{\infty,2-\epsilon}$)
of the (oriented, uniquely ergodic) $1$-solenoid $S$ into $M$. If
$\dim M=2$ then (a positive multiple of) $a$ can be fully
represented by a transversal immersion of $S$ into $M$.
\end{theorem}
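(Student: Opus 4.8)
The plan is to keep the solenoid $S$ fixed (exactly as just constructed) and to build the map $f$ so that its generalized current equals a positive multiple of $a$; the word ``fully represents'' will be supplied at the very end by Theorem~\ref{thm:11.12}. Writing $S=([0,1]\x K)_{/(0,x)\sim(1,h(x))}$, a leafwise-$C^\infty$ immersion $f:S\to M$ is the same data as a transversally $C^{2-\epsilon}$ family of paths $\s_x:[0,1]\to M$ with $\s_x(0)=g(x)$ and $\s_x(1)=g(h(x))$, where $g=f|_{\{0\}\x K}:K\to M$ records the image of the transversal. For a closed $1$-form $\o$ the generalized current is the $\mu_K$-average
\begin{equation*}
\la (f,S_\mu),\o\ra=\int_K\Big(\int_0^1 \s_x^*\o\Big)\,d\mu_K(x).
\end{equation*}
Because $h_*\mu_K=\mu_K$, an exact $\o=d\phi$ telescopes to $\int_K\big(\phi(g(h(x)))-\phi(g(x))\big)\,d\mu_K=0$, so the pairing depends only on $[\o]$; this is the quantity I must force to equal $\la a,[\o]\ra$ up to one fixed positive factor.

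The naive attempt takes $g\equiv p$ and lets $\s_x$ be a based loop $\g_j$ as $x$ ranges over a clopen piece $K_j$ of a partition $K=\bigsqcup_j K_j$, producing the current $\sum_j\mu_K(K_j)[\g_j]$. The obstruction is that $\mu_K$ of a clopen subset of the Denjoy set lies in the countable set $(\ZZ+\ZZ\a)\cap[0,1]$, with $\a$ the rotation number of $h$, so closed loops reach only countably many classes and cannot hit a general ray $\{\lambda a:\lambda>0\}$. This is the main obstacle. I remove it by passing to \emph{non-closed} segments: choose embedded loops $\g_1,\dots,\g_b$ whose classes $a_j=[\g_j]$ form a basis of $H_1(M,\RR)$, let the base map run continuously along them, $g(x)=\g_j(\varphi_j(x))$ for continuous transverse speeds $\varphi_j:K_j\to\TT$, and let $\s_x$ be the sub-arc of $\g_j$ from $\g_j(\varphi_j(x))$ to $\g_j(\varphi_j(h(x)))$, with short contractible connectors inserted where the partition switches loops. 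For closed $\o$ the averaged period then equals $\sum_j\la a_j,[\o]\ra\int_{K_j}\big(\widetilde{\varphi_j\circ h}-\widetilde\varphi_j\big)\,d\mu_K$ (tilde denoting a lift), and since the $\varphi_j$ are arbitrary Cantor functions rather than indicators, the real numbers $\int_{K_j}(\widetilde{\varphi_j\circ h}-\widetilde\varphi_j)\,d\mu_K$ vary over full intervals. I can therefore solve the resulting finite linear system exactly, obtaining $[f,S_\mu]=\lambda a$ on the nose for a suitable $\lambda>0$ (fixing signs/orientations to make $\lambda$ positive).

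It remains to make $f$ a genuine embedding of the stated regularity and to check the hypotheses of Theorem~\ref{thm:11.12}. Since $K$ embeds in a transverse disk of dimension $n-1$, for $\dim M\ge3$ the $n-1\ge2$ transverse directions leave room to keep the Cantor bundle of arcs pairwise disjoint, in particular near the junctions over $T=\{0\}\x K$, yielding an embedding of class $C^{\infty,2-\epsilon}$; for $\dim M=2$ the single transverse direction forces the leaf images to cross, so one obtains only a transversal immersion. The projection $\pi:S\to\TT$, $\pi(t,x)=t$, exhibits a trapping region $W=\pi^{-1}((-\epsilon_0,\epsilon_0))$ with global transversal $T$, and the components of $\pi^{-1}(\TT-\{0\})$ meet $T$ in the required pair $\{x,y\}$ through the gluing $(1,x)\sim(0,h(x))$; arranging the slab $t\approx0$ to map into a small contractible ball puts $f(W)$ inside a ball. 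As $S$ is minimal and uniquely ergodic, Theorem~\ref{thm:11.12} then upgrades the equality $[f,S_\mu]=\lambda a$ to full representation, finishing the proof. The genuinely hard point is achieving \emph{exact} real periods and a smooth embedding simultaneously: the winding-speed construction that defeats the $(\ZZ+\ZZ\a)$ countability must be carried out while keeping the transverse Cantor bundle embedded and the trapping slab confined to a ball.
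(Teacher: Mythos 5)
You have put your finger on a real subtlety --- a partition of $K$ into disjoint compact (hence relatively clopen) pieces can only have $\mu_K$-measures in a countable subgroup of $\RR$ generated by $1$, the rotation number $\a$, and the gap data of $h$ --- but your escape route does not work, and it fails for exactly the same reason. The integrals $\int_{K_j}(\widetilde{\varphi_j\circ h}-\widetilde\varphi_j)\,d\mu_K$ do \emph{not} sweep out intervals: they are quantized by the $h$-invariance of $\mu_K$. To see this, package any immersion of your fixed solenoid globally as a continuous map $g:K\to M$ (the image of the transversal) together with paths $\s_x$ from $g(x)$ to $g(h(x))$. Extend $g$ to a loop $\bar g:\TT\to M$ of some class $D\in H_1(M,\ZZ)$ (possible since $g$ is uniformly continuous and all but finitely many gaps are short), and let $\b_x$ be the image under $\bar g$ of the arc from $x$ to $h(x)$ determined by a lift $\widetilde h$. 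For a closed form $\o$, a primitive $\Phi$ of $\bar g^*\o$ on $\RR$ satisfies $\Phi=\la D,[\o]\ra\,\id+\psi$ with $\psi$ periodic, whence
\begin{equation*}
\int_K\Bigl(\int_{\b_x}\o\Bigr)d\mu_K(x)=\int_K(\psi\circ h-\psi)\,d\mu_K+\la D,[\o]\ra\int_K(\widetilde h-\id)\,d\mu_K=\la D,[\o]\ra\,\a\,,
\end{equation*}
because a coboundary integrates to zero against an invariant measure and $\int_K(\widetilde h-\id)\,d\mu_K$ is the rotation number. Since $\s_x$ differs from $\b_x$ by a loop whose class $c(x)\in H_1(M,\ZZ)$ is locally constant in $x$, every immersion of your fixed $S$ has $[f,S_\mu]=\a\,D+\sum_A\mu_K(A)\,c_A$ with $A$ running over a finite clopen partition. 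This is a countable subset of $H_1(M,\RR)$, and it misses the ray $\RR_{>0}\,a$ for generic $a$ as soon as $b_1(M)\ge 2$; the winding speeds $\varphi_j$ only affect $D$ and the $c_A$, so they buy you nothing.

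The resolution is to give up the fixed solenoid and adapt the Denjoy map to the coefficients: construct $h$ by blowing up the $R_\a$-orbits of the $r$ points $0,\lambda_1,\lambda_1+\lambda_2,\dots,\lambda_1+\cdots+\lambda_{r-1}$, so that $K$ comes equipped by construction with a partition into compact sets $K_i$ in cyclic order with $\mu_K(K_i)=\lambda_i$ exactly. This is the route the paper takes (the existence of that partition is precisely the point where the freedom in choosing $h$ is used): each flow-box $[c,1]\x K_i$ is sent once around a cycle homologous to $C_i$, the central slab goes into a ball $B$ on which the test form is arranged to vanish, and then $\la[f,S_\mu],[\o]\ra=\sum_i\mu_K(K_i)\la C_i,[\o]\ra=\la a,[\o]\ra$ with no further tuning. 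Your discussion of embedding versus transversal immersion according to $\dim M$, and the appeal to Theorem \ref{thm:11.12} through the trapping region, are correct and agree with the paper.
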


\begin{proof}
Let $C_1,\ldots, C_{b_1}$ be (integral) $1$-cycles which form a basis of the
(real) $1$-homology of $M$. Switch orientations and reorder the
cycles if necessary so that there are real numbers
$\lambda_1,\ldots, \lambda_r>0$ such that
 $$
 a=\lambda_1 C_1 + \cdots+ \lambda_r C_r.
 $$
By dividing by $\sum \lambda_i$ if necessary, we can assume that
$\sum \lambda_i=1$.

Consider the solenoid $S$ constructed above and partition the
cantor set $K$ into $r$ disjoint compact subsets $K_1,\ldots, K_r$
in cyclic order, each of which with
 $$
 \mu_K(K_i)=\lambda_i\, .
 $$
Consider the transversal $T=\{0\}\x\TT$ in $S_h$. We consider angles
$\theta_1,\theta_2,\ldots, \theta_n\in \TT$ in the same cyclic order as the
$K_i$, such that $K_i$ is contained in the open subset $U_i\subset
T$ with boundary points $\theta_i$ and $\theta_{i+1}$ (denoting
$\theta_{n+1}=\theta_1$). We may assume that $\theta_1=0$. Remove the segments
$[c,1]\x \{\theta_i\}$ from $S_h$ to get the open $2$-manifold
 $$
 U=S_h - \cup_i ([c,1]\x \{\theta_i\})\, .
 $$

\begin{figure}[h] \label{figure2}
\centering
\resizebox{6cm}{!}{\includegraphics{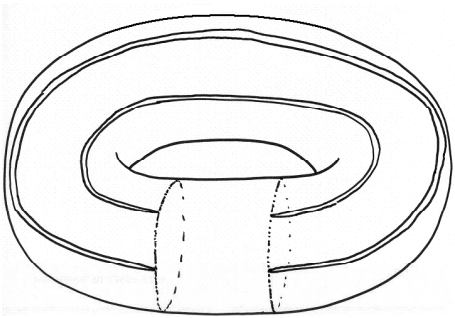}}    % name of the file - without extension
\caption{The open manifold $U$.}
\end{figure}

By construction, our solenoid $S$ is included as a subset of $U$,
$S\subset U$.

Suppose that $\dim M\geq 3$. Then we can $C^\infty$-smoothly embed
$F:U\to M$ as follows: suppose that all cycles $C_i$ share a common
base-point $p_0\in M$ (and are otherwise disjoint to each other).
Then embed the central part $(0,c)\x \TT\subset U$ in a small ball
$B$ around $p_0$ and embed each of the $[c,1]\x U_i$ in $M-B$ in
such a way that if we contract $B$ to $p_0$ then the images of
$[c,1]\x \{t\}$, $t\in U_i$, represent cycles homologous to $C_i$.

The embedding $f$ of $S$ into $M$ is defined as the composition
$S\inc U \stackrel{F}{\to} M$. By theorem
\ref{thm:11.12}, as $S$ is uniquely ergodic,
to prove that $f:S\to M$
fully represents $a$, it is enough to see that $[f,S_{\mu}]=a$.

Let $\alpha$ be any closed $1$-form on $M$. Since $H^1(M)=H^1(M,B)$,
we may assume that $\alpha$ vanishes on $B$. We cover the solenoid
$S$ by the flow-boxes $((0,c)\x \TT)\cap S$ and $[c,1]\x K_i$,
$i=1,\ldots, r$. As $f^*\alpha$ vanishes in the first flow-box, we
have
  \begin{align*}
  \la [f,S_\mu],[\alpha]\ra &= \sum_{i=1}^r \int_{K_i} \left( \int_{[c,1]}
  f^*\alpha \right) d\mu_{K_i}(y) = \sum_{i=1}^r \int_{K_i} \la
  C_i ,[\alpha] \ra d\mu_{K_i}(y) \\ &= \sum_{i=1}^r \la
  C_i ,[\alpha] \ra \mu (K_i) = \sum_{i=1}^r \lambda_i \la
  C_i ,[\alpha] \ra =\la a,[\alpha]\ra\, ,
  \end{align*}
proving that $[f,S_\mu]=a$.
%Now the result follows from theorem
%\ref{thm:11.12}.

\medskip

Now suppose that $\dim M=2$. Let us do the appropriate modifications
to the previous construction. Choose cycles $C_i$ sharing a common
base-point $p_0\in M$, and such that their intersections (and
self-intersections) away from $p_0$ are transversal. Changing $C_i$
by $2C_i$ if necessary, we suppose that going around $C_i$ does not
change the orientation (that is, the normal bundle to $C_i$ is
oriented, hence trivial). From the manifold $U$ in Figure 2, remove $[0,c]\x
\{\t_1\}$ to get the open $2$-manifold
 $$
 V=\big( (0,c)\x (0,1) \big) \bigcup \cup_i \big( [c,1]\x U_i \big) \, .
 $$

\begin{figure}[h] \label{figure3}
\centering
\resizebox{5cm}{!}{\includegraphics{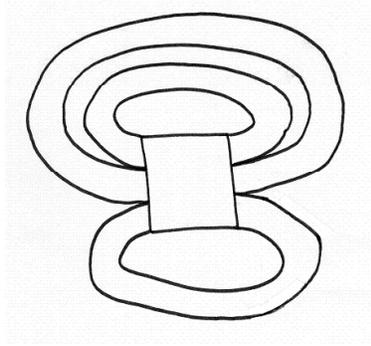}}    % name of the file - without extension
\caption{The open manifold $V$}
\end{figure}

The manifold $V$ can be immersed into the surface $M$, $F:V\to M$,
in such a way that $(0,c)\x (0,1)$ is sent to a ball $B$ around
$p_0$, $[c,1]\x U_i$ are sent to $M-B$, the images of $[c,1]\x
\{t\}$, $t\in U_i$, represent cycles homologous to $C_i$ if we
contract $B$ to a point, and the intersections and
self-intersections of horizontal leaves are always transverse.

Note that the solenoid $S$ is not contained in $V$, since we have
removed $[0,c]\x\t_1$ from $U$. So we cannot define an immersion
$f:S\to M$ by restricting that of $F$. To define $f$ in $S\cap
((0,c)\x \TT)$, we need to explicit out our isotopy $h_t$.
Consider $h:\TT\to\TT$ and lift it to $\tilde{h}:\RR \to \RR$ with
$r:=\tilde{h}(0)\in (0,1)$. Consider a smooth function
$\rho:\RR\to [0,1]$, with $\rho(t)=1$ for $t\leq 0$, $\rho(t)=0$
for $t\geq c$, and $\rho'(t)<0$ for $t\in (0,c)$. Then we can
define
 $$
 h_t(x)= \tilde{h} ( \tilde{h}^{-1}(x) \rho(t) + x (1-\rho(t))) \mod \ZZ\, .
 $$

Define the immersion $f:S\to M$ as follows: $f$ equals $F$ for
$(t,x)\in [c,1]\x K \subset V$. For $(t,h^{-1}(h_t(x)))\in S\cap
([0,c]\x \TT)$, we set
  $$
  f(t,h^{-1}(h_t(x)))=\left\{\begin{array}{ll}
   F(t, (\tilde{h}^{-1}(x)+1) \rho(t) + x (1-\rho(t))), \qquad &
   x \in K \cap (0,r)\, ,\\
  F(t, \tilde{h}^{-1}(x) \rho(t) + x (1-\rho(t))), \qquad &
  x \in K \cap (r,1)\, .
   \end{array}\right.
  $$
It is easily checked that $f$ sends $S\cap ([0,c]\x \TT)$ into the
ball $B$ and the intersections of the leaves in this portion of the
solenoid are transverse.

The proof that
%fully represents a positive multiple of $a$
%and that
the Ruelle-Sullivan homology class of $f:S\to M$ is $[f,S_\mu]=a$ goes as before.
\end{proof}

\begin{remark}\label{rem:no-precisa-compactness}
We  do not need $M$ to be compact for the above construction to work. If
$M$ is non-compact, take integer $1$-cycles $C_1,C_2,\ldots$
(possibly infinitely many) which form a basis of $H_1(M,\RR)$. Then
for any $a\in H_1(M,\RR)$ there exist an integer $r\geq 1$ and
$\lambda_1,\ldots,\lambda_r\in \RR$ with $a=\sum \lambda_i C_i$. The
construction of theorem \ref{thm:construct-1-solenoid} works.

The solenoid $S$ is oriented, regardless of $M$ being oriented or
not.
\end{remark}

%%%%%%%%%%%%%%%%%%%%%%%%%%%%%%%%%%%%%%%%%%%%%%%%%%
\section{Realization of $H_k(M,\RR)$}\label{sec:k-solenoids}
%%%%%%%%%%%%%%%%%%%%%%%%%%%%%%%%%%%%%%%%%%%%%%%%%%

Let $M$ be a smooth compact oriented Riemannian $C^\infty$ manifold
and let $a\in H_k(M,\RR)$ be a non-zero real $k$-homology class.
We are going to construct
%generalize the construction of section
%\ref{sec:1-solenoids} to obtain
a uniquely ergodic $k$-solenoid
$f:S\to M$ with a $1$-dimensional transversal structure, immersed in
$M$ and fully representing $a$.

%
%By theorem \ref{thm:11.12bis}, it is enough to produce a uniquely
%ergodic oriented $k$-solenoid $S$, with a trapping region $W\subset
%S$ and an immersion $f:S\to M$ such that $f(W)$ is sent to a
%contractible ball in $M$, and with associated Ruelle-Sullivan homology class
%$[f,S_\mu]=a$. Then, for the normalized (uniquely ergodic) measure
%$\nu=\mu/\mu(S)$, it is $[f,S_\nu]=a/\mu(S)$, and so $f:S\to M$ fully
%represents  (by theorem \ref{thm:11.12bis}) the class $a/\mu(S)$.
%
%Note also that if we change the Riemannian metric of $M$, then the
%solenoid $f:S\to M$ will still fully represent a positive multiple of
%$a$ (although the scalar factor may change).

\medskip

To start with, fix a collection of compact $k$-dimensional smooth
oriented manifolds $S_1,\ldots, S_r$ and positive numbers
$\lambda_1,\ldots, \lambda_r>0$ such that $\sum \lambda_i=1$. For any fixed $\epsilon > 0$, let
$h:\TT\to \TT$ be a diffeomorphism of the circle which is a Denjoy
counter-example with an irrational rotation number and of class
$C^{2-\epsilon}$. Hence $h$ is uniquely
ergodic. Let $\mu_K$ be the unique invariant probability measure,
which is supported in the minimal Cantor set $K\subset \TT$. Partition the
Cantor set $K$ into $r$ disjoint compact subsets $K_1,\ldots, K_r$
in cyclic order, each of which with $\mu_K(K_i)=\lambda_i$.

We fix two points on each manifold $S_i$, and remove two small
balls, $D_i^+$ and $D_i^-$, around them. Denote
 $$
 S_i'=S_i- (D_i^+ \cup D_i^-)\, ,
 $$
so that $S_i'$ is a manifold with oriented boundary $\bd S_i'=\bd
D_i^+\sqcup \bd D_i^-$. Fix two diffeomorphisms: $\bd D_i^+ \cong
S^{k-1}$, being orientation preserving, and $\bd D_i^- \cong S^{k-1}$,
being orientation reversing. There are inclusions
 $$
  A_\pm :=\bigsqcup (\bd D_i^\pm \x K_i)
  \stackrel{i_\pm}{\inc} S^{k-1}\x S^1\, ,
 $$
whose image is $S^{k-1}\x K\subset S^{k-1}\x S^1$. Define
  $$
   S= \bigsqcup (S_i' \times K_i)_{/ (x,y) \sim i_+^{-1}\circ (\id\x h)
   \circ i_-(x,y),\,
   (x,y)\in A_- } \, .
  $$

This is an oriented $k$-solenoid of class $C^{\infty,2-\epsilon}$,
with $1$-dimensional transversal dimension. As $S^{k-1}\x K\subset
S$ in an obvious way, fixing a point $p\in S^{k-1}$, we have a
global transversal $T=\{p\}\x K \subset S^{k-1}\x K\subset S$.
Identifying $T\cong K$, the holonomy pseudo-group is generated by
$h:K\to K$. Hence $S$ is uniquely ergodic. Let $\mu$ denote the
transversal measure corresponding to $\mu_K$.

We want to give an alternative description of $S$. Fix an isotopy
$h_t$, $t\in [0,1]$, from $\id$ to $h$. Define the set
 $$
  W' : = \{ (t,x,h^{-1}(h_t(y)))\, ;\, t\in [0,1], x\in S^{k-1}, y\in K
   \}\subset [0,1]\x S^{k-1} \times S^1\, .
 $$

Then we have that
  $$
   S= \left( \bigsqcup (S_i' \times K_i) \sqcup W'
   \right)_{/ (x,y) \sim (0,i_-(x,y)),\, (x,y)\in \bd D_i^- \x K_i
  \atop (x,y)\sim (1,i_+(x,y)), \, (x,y)\in \bd D_i^+ \x K_i} \, .
  $$
Strictly speaking, we should say that they are diffeomorphic, but we
shall fix an identification. We define a map $\pi:S\to \TT$ by
  \begin{equation*}
  \left\{ \begin{array}{ll}
  \pi(t,x,h^{-1}(h_t(y))) =t-\frac12, \qquad & (t,x,h^{-1}(h_t(y)))\in W'\, ,\\[10pt]
  \pi(p)= \frac12, \qquad & p\in S-W'\, .
  \end{array} \right.
  \end{equation*}
Then $W=\Int(W')=\pi^{-1}(-\frac12,\frac12)$ is a trapping region according to
definition \ref{def:trapping}.

\medskip

Consider angles $\t_1,\t_2,\ldots, \t_n\in \TT$ in the same cyclic
order as the $K_i$, such that $K_i$ is contained in the open subset
$U_i\subset T$ with boundary points $\t_i$ and $\t_{i+1}$ (denoting
$\t_{n+1}=\t_1$). We may assume that $\t_1=0$. Then the solenoid $S$
sits inside the $(k+1)$-dimensional open manifold
  $$
  X= \bigsqcup (S_i'\times U_i) \sqcup ([0,1]\x S^{k-1} \times S^1
  )_{/ (x,y) \sim (0,i_-(x,y)),\, (x,y)\in \bd D_i^- \x U_i
  \atop (x,y)\sim (1,i_+(x,y)), \, (x,y)\in \bd D_i^+ \x U_i} \, ,
  $$
as the collection of points $(x,y)$, $x\in S_i'$, $y\in K_i$,
together with the points $(t,x,h^{-1}(h_t(y)))$, $x\in S^{k-1}$,
$y\in K$, $t\in [0,1]$.

\begin{figure}[h]\label{figure4}
\centering
\resizebox{10cm}{!}{\includegraphics{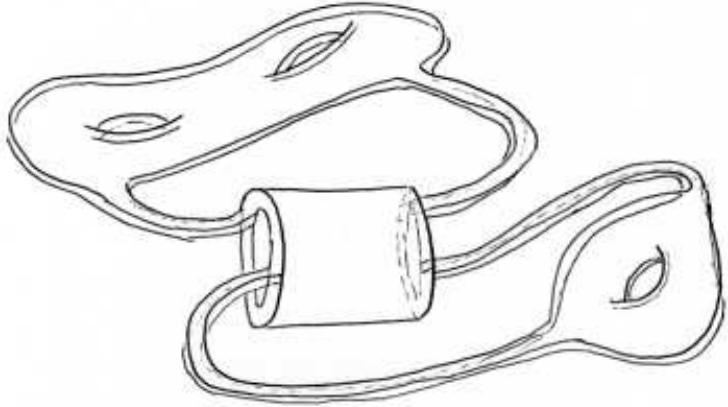}}    % name of the file - without extension
\caption{The manifold $X$.}
\end{figure}

\begin{figure}[h]\label{figure5}
\centering
\resizebox{10cm}{!}{\includegraphics{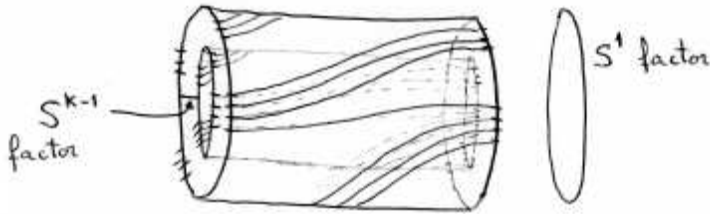}}    % name of the file - without extension
\caption{The trapping region $W'$.}
\end{figure}

%Finally, using lemma \ref{lem:mucha-regularidad} we give $S$ the
%structure of a $C^{\infty,\infty}$-solenoid. We isotop the
%embedding $S\inc X$ to a $C^{\infty,\infty}$-embedding, with a
%small perturbation (supported on $W'$).

\begin{remark}
The $1$-solenoid constructed in section \ref{sec:1-solenoids}
corresponds to the case $S_i=S^1$, $i=1,\ldots, r$.
\end{remark}

\begin{theorem}\label{thm:construct-k-solenoid}
Let $M$ be a compact oriented smooth Riemannian manifold of
dimension $n$, and let $a\in H_k(M,{\RR})$ be a non-zero real
$k$-homology class. Then (a positive multiple of) $a$ can be fully represented by a
transversal immersion $f:S\to M$ of a uniquely ergodic oriented $k$-solenoid. If moreover,
$n\geq 2k+1$ then we can suppose that $f$ is an embedding.
% \begin{enumerate}
% \item If $n\geq 2k+1$ then (a positive multiple of) $a$ can be fully represented by an
%embedding of a uniquely ergodic oriented $k$-solenoid $S$ into $M$.
%  \item If $n-k$ is odd then (a positive multiple of) $a$ can be fully represented by a
% transversally immersion of a uniquely ergodic oriented $k$-solenoid
% $S$ into $M$.
%  \item In other cases (a positive multiple of) $a$ can be fully represented by an immersed
% uniquely ergodic oriented $k$-solenoid $S$ in $M$.
%\end{enumerate}
\end{theorem}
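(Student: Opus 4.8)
The plan is to imitate the proof of Theorem \ref{thm:construct-1-solenoid}, with the circles there replaced by the manifolds $S_i$ out of which $S$ was built. First, by Thom's theorem \cite{Thom1,Thom2} a suitable positive multiple of $a$ can be written as $\sum_{i=1}^r \lambda_i c_i$ with $\lambda_i>0$, $\sum_i\lambda_i=1$, and $c_i=(g_i)_*[S_i]\in H_k(M,\ZZ)$ for smooth maps $g_i\colon S_i\to M$ of closed oriented $k$-manifolds; after reorienting the $S_i$ these are exactly the building blocks of the solenoid $S$ constructed above, with the weights $\mu_K(K_i)=\lambda_i$. I would then homotope each $g_i$ so that the two removed disks $D_i^\pm$ are sent into a fixed contractible ball $B$ around a common basepoint $p_0\in M$, so that the caps contribute nothing to the class once $B$ is collapsed.

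The heart of the argument, and the step I expect to be the \emph{main obstacle}, is to replace the maps of the leaf pieces by genuine leafwise immersions in arbitrary codimension. What must be immersed is not the closed manifold $S_i$ but the punctured manifold $S_i'=S_i-(D_i^+\cup D_i^-)$, which, having nonempty boundary, collapses onto a spine of dimension at most $k-1$. By Hirsch's immersion theorem (applicable since $k<n$) it suffices to produce a bundle monomorphism $TS_i'\to g_i^*TM$; the obstructions to this lie in $H^m(S_i';\pi_{m-1}(V_k(\RR^n)))$, and these vanish for $m\le n-k$ (the frame manifold $V_k(\RR^n)$ is $(n-k-1)$-connected) and for $m\ge k$ (by the homotopy dimension of $S_i'$). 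This already settles the case $n\ge 2k-1$; in lower codimension the remaining primary obstruction need not vanish for the given $g_i$, and the point is to exploit the freedom in the choice of the representative $(S_i,g_i)$—only the capped homology class $c_i\in H_k(M,B)\cong H_k(M)$ must be preserved—to pass to an immersible representative (classically, via level sets in codimension one and via general position when $n\ge 2k$). Once each $g_i|_{S_i'}$ is an immersion keeping $D_i^\pm$ in $B$, I would immerse the ambient open manifold $X$: the tube $[0,1]\x S^{k-1}\x S^1$ carrying the trapping region goes into $B$ (immersing a $k$-cylinder into a ball is elementary), each slab $S_i'\x U_i$ carries the immersion of $S_i'$, and $f=F|_S$ is a leafwise immersion; a general position perturbation makes the leaf images meet transversally.

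The computation of the class is then formally the same as in dimension one. Given a closed $k$-form $\alpha$, I may assume $\alpha|_B=0$ since $B$ is contractible and $H^k(M)=H^k(M,B)$. Then $f^*\alpha$ vanishes on the tube flow-box, and as the caps lie in $B$ one has $\int_{S_i'\x\{y\}}f^*\alpha=\int_{S_i}g_i^*\alpha=\la c_i,[\alpha]\ra$, whence
$$
\la [f,S_\mu],[\alpha]\ra = \sum_{i=1}^r \int_{K_i}\Big(\int_{S_i'\x\{y\}} f^*\alpha\Big)\,d\mu_{K_i}(y) = \sum_{i=1}^r \la c_i,[\alpha]\ra\,\mu(K_i) = \sum_{i=1}^r \lambda_i\,\la c_i,[\alpha]\ra = \la a,[\alpha]\ra .
$$
Thus $[f,S_\mu]=a$. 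Because $h$ is a Denjoy example it acts minimally on $K$, so $S$ is a minimal, oriented, uniquely ergodic $k$-solenoid with trapping region $W$ and $f(W)\subset B$; Theorem \ref{thm:11.12} then promotes the equality $[f,S_\mu]=a$ to the statement that $f$ fully represents $a$.

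For the embedding statement under $n\ge 2k+1$ I would run the same construction in general position: in codimension $n-k>k$ the leaf immersions can be taken to be embeddings, two distinct leaves can be made disjoint, and the slabs injective, so that $f$ is injective; since $S$ is compact, $f$ is then a homeomorphism onto its image, hence an embedding. I expect this to be routine compared with the immersion step, the only delicate point being to realize the disjointness of leaves uniformly over the Cantor transversal $K$ rather than merely for a single pair of leaves.
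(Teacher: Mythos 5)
Your overall architecture coincides with the paper's: build $S$ from manifolds $S_i$ realizing integral classes weighted by $\lambda_i=\mu_K(K_i)$, send the trapping region into a ball $B$ around a common basepoint, compute $\langle [f,S_\mu],[\alpha]\rangle$ for a closed form $\alpha$ vanishing on $B$, and invoke Theorem \ref{thm:11.12} to upgrade $[f,S_\mu]=a$ to full representation. That part is fine. But the step you yourself flag as the main obstacle is genuinely incomplete. Your Hirsch-obstruction count gives leafwise immersibility of $S_i'$ only for $n\ge 2k-1$; together with level sets (which handle $n-k$ odd) and general position (which handles $n\ge 2k$), this still leaves every case with $n-k$ even and $2\le n-k\le k-2$ untreated, and ``exploit the freedom in the choice of the representative'' is not an argument there. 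The paper closes exactly this gap with Lemma \ref{lem:C2}: run the odd-codimension sphere-map construction of Lemma \ref{lem:C1} (which uses Serre's finiteness of the higher homotopy of odd-dimensional spheres) in $M\times\RR$, obtaining an embedded $C\subset M\times\RR$ with trivial normal bundle, and then push it down to an immersion in $M$, still with trivial normal bundle, via the Rourke--Sanderson compression theorem. Without some such device your proof does not cover all $(n,k)$.

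A second, related gap: what must be immersed is not just the leaf $S_i'$ but the $(k+1)$-dimensional slab $S_i'\times U_i$ (together with the thickened connecting tubes $\gamma_i^\pm\times D^k\times U_i$), coherently over the whole interval $U_i$ and with the $U_i$-directions matching in orientation at the two ends where the slab is glued to the central tube inside $B$. This is precisely where the paper uses the \emph{triviality} of the normal bundle of the immersed $S_i$. Your h-principle immersions of $S_i'$ come with no control on their normal bundles (already for $n=2k-1$ the obstruction to a nowhere-zero normal field on $S_i'$ lives in $H^{k-1}(S_i')$ and need not vanish), so the assertion that ``each slab carries the immersion of $S_i'$'' is unjustified as stated, and the orientation-matching issue is not addressed. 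Finally, in codimension one the central tube $[0,1]\times S^{k-1}\times S^1$ does not fit in $B$ at all (one needs $n\ge k+2$ to embed $D^k\times S^1$ in $D^{n-1}$); the paper handles this by removing an axis from $B$, passing to polar coordinates, and reusing the immersion trick from the proof of Theorem \ref{thm:construct-1-solenoid} --- a case your ``elementary'' remark skips over.
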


\begin{proof}
By proposition \ref{prop:appendix-c:thom}, we may take a collection
$C_1,\ldots, C_{b_k}\in H_k(M,{\ZZ})$ which are a
basis of $H_k(M,{\QQ})$ and such that $C_i$ are represented by immersed
submanifolds $S_i\subset M$ with trivial normal bundle and self-transverse
intersections, and such that $S_i$ intersects $S_j$ transversally.
Moreover, if $n\geq 2k+1$, we may assume that there are no
either intersections or self-intersections.

After switching the orientations of $C_i$ if necessary,
reordering the cycles and multiplying $a$ by a suitable positive
real number, we may suppose that
 $$
 a=\lambda_1 C_1 +\ldots +\lambda_r C_r,
 $$
for some $r\geq 1$, $\lambda_i>0$, $1\leq i\leq r$, and $\sum
\lambda_i=1$. We construct the solenoid $S$ with the procedure above
starting with the manifolds $S_i$ and coefficients $\lambda_i$. This
is a uniquely ergodic $k$-solenoid with a $1$-dimensional
transversal structure, and a trapping region $W\subset S$ (see \cite{MPM3}).

Now we want to define an immersion $f:S\to M$, and to prove that
it fully represents $a$. We have the following cases:

\begin{enumerate}
\item $n\geq 2k+1$. The general position property on the $S_i$
implies that all $S_i$ are disjoint submanifolds of $M$. As the
normal bundle to $S_i$ is trivial and $U_i$ is an interval, we can
embedded $S_i\x U_i$ in a small neighbourhood of $S_i$.

Fix a base point $p_0\in M$ off all $S_i$. Take a small box
$B\subset M$ around $p_0$ of the form $B=[0,1]\x D^{n-1}$, where
$D^{n-1}$ is the open $(n-1)$-dimensional ball. Consider a circle
$S^1\subset D^{k+1}\subset D^{n-1}$ and let $D^k\x S^1 \subset
D^{k+1}\subset D^{n-1}$ be a tubular neighbourhood of it, with
boundary $S^{k-1}\x S^1$.

For each $i=1,\ldots, r$, fix $y_i\in U_i$, and consider two paths
in $M-\Int(B)$, say $\g_i^\pm$, where $\g_i^-$ goes from the point
$(0,y_i)\in \{0\}\x U_i \subset \{0\}\x S^1\subset \{0\}\x
D^{n-1}\subset B$ to the point $(p_i^-,y_i) \in S_i\x U_i$, and
$\g_i^+$ goes from $(1,y_i)\in \{1\}\x U_i \subset \{1\}\x
S^1\subset \{1\}\x D^{n-1}\subset B$ to $(p_i^+,y_i) \in S_i\x U_i$.
We arrange that $\g_i^\pm$ are transverse to $S_i\x U_i$ at
$(p_i^\pm,y_i)$ and are disjoint from all $S_j$ otherwise.

We thicken $\g_i^\pm$ to immersions $\g_i^\pm \x D^{k}\x U_i$ into
$M-\Int(B)$ such that one extreme goes to $D_i^\pm\x U_i$ and the
other goes to either $D^{k} \x U_i \x\{0\} \subset D^{k}\x S^1
\x\{0\} \subset D^{n-1}\x\{0\} \subset B$ for $\g_i^-$, or $D^{k} \x
U_i \x\{1\} \subset D^{k}\x S^1 \x\{1\} \subset D^{n-1}\x\{1\}
\subset B$ for $\g_i^-$. It is possible to do this in such a way
that the $U_i$ directions match, since $n\geq k+2$.

Recall that $S_i'=S_i - (D_i^+\cup D_i^-)$, and set
 \begin{equation*} %\label{eqn:Si''}
  S_i''= S_i' \cup (\g_i^+ \x S^{k-1}) \cup (\g_i^- \x S^{k-1} )\,,
 \end{equation*}
which is diffeomorphic to $S_i'$ (to be rigorous, we should smooth
out corners). Then we can define the set
 \begin{eqnarray*}
 U &:=& \bigcup ((S_i'\x U_i) \cup ( \g_i^+ \x S^{k-1} \x U_i) \cup \\
 & & \cup ( \g_i^- \x S^{k-1} \x U_i) \cup
 ([0,1]\x S^{k-1} \x S^1)\, ,
 \end{eqnarray*}
which is a $(k+1)$-dimensional open manifold embedded in $M$. The manifold
$U$ is foliated as follows: $S_i''\x U_i$ is foliated by
$S_i''\x\{y\}$, for $y\in U_i$, and $[0,1]\x S^{k-1} \x S^1$ is
foliated by
 $$
 L_y=\{(t, x, h^{-1}(h_t(y))) \ ; \ t\in [0,1], x\in S^{k-1} \} \, ,
 $$
for $y \in S^1$. Clearly the solenoid $S$ is a sub-solenoid of $U$,
$S\subset U$. Restricting the embedding $F:U\to M$ to $S$ we get an
embedding $f:S\to M$.

By construction $f(W)\subset \Int(B)$, i.e. the image of the
trapping region is contained in a contractible ball.

 \item $1<n-k\leq k$.
 The same construction as in (1) works now,
 with the modification that we have to allow
 intersections of different leaves, but we may take them to be always transversal.
 So we get a transversal immersion $f:S\to M$.

%  \item $n-k$ even. The embedded submanifolds $S_i\subset M$ may have
%  a non-trivial normal bundle. Moreover, they may intersect each other.
%  The intersections are generically transversal along $S_i\cap S_j$, but not
%  everywhere transversal.
%
%  Take a generic section $s_i$ of the normal bundle to $S_i$, so that the image of
%  the section intersects transversally the zero section.
%  We have maps
%  $f_i:S_i\x (-\epsilon,\epsilon) \to M$ defined by $f_i(x,t)=\exp_x (t s_i(x))$,
%  where  $\exp$ is the exponential mapping (with respect to a Riemannian metric
%  on $M$), which is an immersion for each
%  $t$ fixed, for $\epsilon>0$ small enough. With these maps, the
%  previous construction gives a solenoid immersion $f:S\to M$.

 \item $n-k=1$. The submanifolds $S_i$ have trivial normal bundle and
 they intersect each other transversally. We cannot avoid
 that the paths $\g_i^\pm$ intersect other $S_j$, but we arrange these
 intersections to be transverse. This produces a transversal
 immersion $f$ of the region $S-W$ of the solenoid into $M-\Int(B)$.

 We have to modify the previous construction of the immersion of $W$ into $B$,
 as codimension one does not leave enough room for it to work. Consider the box $B=[0,1]\x
 D^{n-1}$ and remove the axis $A=[0,1]\x \{0\}$. Use polar
 coordinates to identify $B-A= [0,1] \x S^{k-1}\x (0,1)$, where the
 third coordinate corresponds to the radius.
 By construction, $W'\subset S$ embeds into $C=[0,1]\x S^{k-1} \x
 S^1$, as the set of points $(t,x,h^{-1}(h_t(y)))$, $t\in [0,1]$, $x\in S^{k-1}$
 and $y\in K$. We remove $D=[0,1]\x S^{k-1}\x \tau_1$ from $C$, so that $C-D=[0,1] \x S^{k-1}\x
(0,1)$. Then $W'$ immerses into $C-D$, by using the process at the
end of the proof of theorem \ref{thm:construct-1-solenoid} (now
there is an extra factor $S^{k-1}$ which plays no role). This is a
transversal immersion.

There is one extra detail that we should be careful about. When
connecting $p_i^\pm$ with the two faces of $B$, the orientations of
the $U_i$ should match. This happens because the normal bundle to
$S_i$ is trivial, and in this case $S_i\x U_i$ is (diffeomorphic to)
the normal bundle to $S_i$.
\end{enumerate}

We prove now that $f:S\to M$ fully represents
%(a positive multiple of)
$a$, we use theorem \ref{thm:11.12}. The solenoid $S$ has a trapping
region $W$, and $f(W)\subset \Int(B)$, a contractible ball in $M$.
So we only need to see that $[f,S_\mu]=a$.

Recall that the
associated transversal measure is $\mu_K$ on the transversal $K$.
Let $\alpha$ be any closed $1$-form on $M$. Since $H^1(M)=H^1(M,B)$,
we may assume that $\alpha$ vanishes on $B$. We cover the solenoid
$S$ by the flow-boxes $S_i''\x K_i$, $i=1,\ldots, r$, and $W'$
(where the form $\alpha$ vanishes). Thus
  \begin{align*}
  \la [f,S_\mu],[\alpha]\ra &= \sum_{i=1}^r \int_{K_i} \left( \int_{S_i''}
  f^*\alpha \right) d\mu_{K_i}(y) = \sum_{i=1}^r \int_{K_i} \la
  C_i ,[\alpha] \ra \, d\mu_{K_i}(y) \\ &= \sum_{i=1}^r \la
  C_i ,[\alpha] \ra \mu (K_i) = \sum_{i=1}^r \lambda_i \la
  C_i ,[\alpha] \ra =\la a,[\alpha]\ra\, ,
  \end{align*}
proving that $[f,S_\mu]=a$.
\end{proof}

\begin{remark}
A similar comment to that of remark \ref{rem:no-precisa-compactness}
applies to the present situation, that is, the compactness of $M$ is
not necessary.
\end{remark}

\begin{remark}
The orientability of $M$ is not necessary as well. If $M$ is
non-orientable, we may consider its oriented double cover
$\pi:\tilde{M}\to M$. Then for $a\in H_k(M,\RR)$, there exists
$\tilde a \in H_k(\tilde M, \RR)$ with $\pi_*(\tilde a)=a$.

We can consider immersed submanifolds $f_i:S_i\inc \tilde M$ with
transversal self-inter\-sections, and intersecting transversally each other.
Then it is easy to perturb $f_i$ so that $\tilde{f}_i=\pi\circ f_i:S_i \to M$
are immersed oriented submanifolds with transversal self-intersections, and intersecting
transversally each other. This will allow to construct
a uniquely-ergodic oriented $k$-solenoid
$f:S\to M$ transversally immersed in $\tilde M$ fully representing (a multiple of)
$\tilde a$ such that $\pi\circ f: S\to M$
is transversally immersed in $M$ and fully represents (a multiple of) $a$.

If $n\geq 2k+1$, then we can assume that $f$ is an embedding (since transversal intersections
in this dimension do not happen).
\end{remark}

\bigskip

\begin{remark}\label{rem:Sullivan}
 In the article \cite{MPM-p}, we
prove that the currents that we have constructed are general enough in order to
fill a dense subset of the space of currents. Therefore, the generalized
Ruelle-Sullivan currents  associated to
immersed measured oriented uniquely-ergodic solenoids are dense
in the space of closed currents. This question was prompted to the authors
by Dennis Sullivan.
\end{remark}

\setcounter{section}{0}

\renewcommand{\thesection}{{Appendix}}
%%%%%%%%%%%%%%%%%%%%%%%%%%%%%%%%%%%%%%%%%%%%%%%%%%%%%%%%%%%%%%%%%%%%%
\section{Homology classes represented by submanifolds}
\renewcommand{\thesection}{\Alph{section}}\label{sec:appendix-c-thom}
%%%%%%%%%%%%%%%%%%%%%%%%%%%%%%%%%%%%%%%%%%%%%%%%%%%%%%%%%%%%%%%%%%%%%

By a theorem of Thom (see \cite{Thom1} and \cite{Thom2}), if $a\in
H_k(M,\ZZ)$ then there exists $N>>1$ such that $N\cdot a$ is
represented by a smooth submanifold of $M$. This submanifold
$C\subset M$ is oriented because it represents a non-zero homology
class (the top homology of a compact connected non-orientable
manifold is zero). Moreover, if $n\geq 2k+1$ or $n-k$ is odd then it
can be arranged that the normal bundle of $C$ is trivial
\cite{Thom1} \cite{Thom2}. If $n-k$ is even then it can be arranged
that the normal bundle is trivial if and only if $a\cup a=0$. Also
according to Sullivan \cite{Sul}, using Thom's method and the thesis
of Wells \cite{We} one can always represent $N\cdot a$ by an
immersed submanifold $f:C\to M$ with trivial normal bundle. (Note
that the normal bundle is defined for any immersed manifold.)
Moreover, with a small perturbation, we may assume that $f$ has only
transversal self-intersections.

For completeness, we give here a proof of these results by
elementary methods. We start first with the case of odd codimension.

\begin{lemma}\label{lem:C1}
Let $M$ be a compact and oriented manifold of dimension $n$. Let
$0\leq k\leq n$ with $n-k$ odd and $a\in H_k(M,\ZZ)$.

There exists $N>>1$ (dependent only on $n$ and $k$) and a smooth map
$f:M\to S^{n-k}$ such that for a generic point $p\in S^{n-k}$,
 $$
 C=f^{-1}(p) \subset M
 $$
is a smooth submanifold with trivial normal bundle such that $[C] =N
\cdot a$.
\end{lemma}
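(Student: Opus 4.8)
The plan is to exploit the fact that the lemma asks precisely for $N\cdot a$ to be cut out as the preimage of a regular value under a map to a sphere, and to produce such a map by obstruction theory. Write $m=n-k$ (odd) and let $u=\PD(a)\in H^m(M;\ZZ)$ be the Poincar\'e dual. For any smooth $f\colon M\to S^m$ and any regular value $p$ (generic by Sard's theorem), $C=f^{-1}(p)$ is a smooth $k$-dimensional submanifold whose normal bundle is $f^*$ of the normal bundle of $\{p\}$ in $S^m$, hence trivial, and whose Poincar\'e dual is $f^*\eta$, where $\eta\in H^m(S^m;\ZZ)$ is the fundamental class. Thus it suffices to find $N$ and a map $f\colon M\to S^m$ with $f^*\eta=N\,u$; then $[C]=N\,a$, as required.

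Finding $f$ is a lifting problem. The class $u$ is represented by a map $\tilde u\colon M\to K(\ZZ,m)$, and $\eta$ is classified by the canonical map $p\colon S^m\to K(\ZZ,m)$ inducing an isomorphism on $\pi_i$ for $i\leq m$. A map $f$ with $f^*\eta=u'$ exists exactly when the corresponding $\tilde{u'}$ lifts through $p$. So I would study the homotopy fiber $F$ of $p$: from the long exact sequence, $F$ is $m$-connected and $\pi_i(F)\iso\pi_i(S^m)$ for $i>m$.

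The decisive input is that $m$ is odd. By Serre's finiteness theorem $\pi_i(S^m)$ is finite for every $i>m$ when $m$ is odd (the only infinite higher homotopy group of a sphere, $\pi_{2m-1}(S^m)$ for $m$ even, does not occur here); equivalently, an odd sphere is rationally a $K(\QQ,m)$, so $F$ has finite homotopy in all degrees. Consequently the successive obstructions to lifting $\tilde u$ over the skeleta of $M$ lie in the groups $H^{i+1}(M;\pi_i(F))$ with $m+1\leq i\leq n-1$, each of which is finite and depends only on $n$ and $k$.

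Finally I would kill these obstructions by passing to a universal multiple. Replacing $u$ by $N\,u$ amounts to post-composing $\tilde u$ with the degree-$N$ self-map of $K(\ZZ,m)$, which multiplies each obstruction by $N$ through the coefficient action. Choosing $N$ divisible by the exponents of the finitely many finite groups $\pi_i(S^m)$ with $m<i<n$ --- a number depending only on $n$ and $k$ --- and proceeding one stage of the Postnikov filtration at a time, all obstructions vanish and a lift $f$ exists with $f^*\eta=N\,u$. The main obstacle is exactly this last step: the higher obstructions are defined only sequentially and carry indeterminacy, so one must argue stage by stage that a single $N$, uniform in the class, annihilates them; the finiteness forced by $m$ odd is what guarantees such an $N$ exists, and the dimension range $m+1\leq i\leq n-1$ is what makes it depend only on $n$ and $k$.
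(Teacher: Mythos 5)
Your argument is correct and is essentially the paper's own proof in modern packaging: where you phrase the construction as a lifting problem through $S^{n-k}\to K(\ZZ,n-k)$ with obstructions in $H^{i+1}(M;\pi_i(S^{n-k}))$, the paper builds $f$ directly cell by cell from a cocycle representative of $\PD(a)$, but both hinge on exactly the same inputs --- Serre's finiteness of $\pi_i(S^{n-k})$ for $n-k$ odd, killing the finitely many obstruction groups by a universal multiple $N$ depending only on $n$ and $k$, and taking the preimage of a regular value to get a submanifold with trivial normal bundle. The stage-by-stage caveat you flag at the end is handled in the paper in the same sequential way (multiplying the class at each skeletal extension step), so there is no gap relative to the paper's own level of detail.
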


\begin{proof}
%We may assume that $a\in H_k(M,\ZZ)$ after multiplying by an
%integer.
Let $\hat a\in H^{n-k}(M,\ZZ)$ be the Poincar\'e dual of $a$. We aim
to construct a map $f: M\to S^{n-k}$ such that $f^*([S^{n-k}])$ is a
multiple of $\hat a$. For this, consider a CW decomposition of $M$,
and let $\bar a \in C^{n-k}(M, \ZZ)$ with $\partial \bar a =0$ and
$[\bar a]=\hat a$.

We start by considering a map $f$ from the $(n-k-1)$-skeleton of
$\bar a$ to a base point $p\in S^{n-k}$. To define $f$ in the
$(n-k)$-skeleton, write
 $$
 \hat a=\sum_i n_i \, C_i^* \, ,
 $$
with $C_i$ being the $(n-k)$-cells of $M$. Then define $f_{|C_i}$ in
such a way that the induced map $f_{|C_i}:C_i/\partial C_i \to
S^{n-k}$ has degree $n_i$.

To extend $f$ to the higher skeleta, we work as follows: let $T$ be
an $(n-k+1)$-cell of $M$. Since
  $$
  \hat{a} (\partial T) = \partial\hat a (T)=0\, ,
  $$
we have that $f_{|\partial T}:\partial T \to S^{n-k}$ has degree
$0$. Therefore,  we can extend $f$ to a map $T \to S^{n-k}$. Now by
induction on $l=1,2,\ldots$ we assume that the map $f$ has been
extended to the $(n-k+l-1)$-skeleton of $M$ and we wish to extend it
to the $(n-k+l)$-skeleton. Let $T$ be a $(n-k+l)$-cell. The map
$f_{|\partial T }:\partial T\to S^{n-k}$ gives, recalling that
$\partial T \cong S^{n-k+l-1}$, an element
 $$
 [f_{|\partial T}] \in \pi_{n-k+l-1}(S^{n-k})\,.
 $$
By a result of Serre \cite{Se}, this group is torsion (since $n-k$
is odd). So there is a non-zero integer $k_l$ such that $k_l \cdot
[\partial T]=0$. Multiplying $a$ by $m_l$, the map $f'$ (in the
$(n-k+l-1)$-skeleton) corresponding to $a'=m_l\cdot a$ is the
composition of $f$ with a map $S^{n-k}\to S^{n-k}$ of degree $k_l$.
Therefore $[f'_{|\partial T}]=m_l\cdot [f_{|\partial T}]=0$, and
there is no obstruction to extend $f'$ to the cell $T$, and hence to
the $(n-k+l)$-skeleton.

In this way, we get an extension to the $n$-skeleton, i.e. to $M$.
This gives a continuous map $f: M\to S^{n-k}$ and it is trivial to
verify that $f^*([S^{n-k}])=N\cdot \hat a$, for some large integer
$N$ (actually, $N=m_2m_3\cdots m_{k}$).

Now, we homotop $f$ to a smooth function, which we call $f$ again.
Taking a regular value $p\in S^{n-k}$, we have a smooth submanifold
$C=f^{-1}(p)$ of dimension $k$, and with trivial normal bundle.
Clearly, $[C]=PD[N\cdot \hat a]= N\cdot a$.
\end{proof}

\begin{lemma}\label{lem:C2}
Let $M$ be a compact and oriented manifold of dimension $n$. Let
$1\leq k\leq n$ with $n-k$ even and $a\in H_k(M,\ZZ)$.

There exists $N>>1$ (only dependent of $n$ and $k$), an immersion
$i:C\to M$ of an oriented compact manifold $C$ with $i_*[C]=N \cdot
a$ and whose normal bundle $\nu_{C/M}\to C$ is trivial.
\end{lemma}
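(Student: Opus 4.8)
The plan is to mirror the obstruction-theoretic construction of Lemma \ref{lem:C1}, the only new phenomenon being the free summand that appears in even codimension. Write $c=n-k$ and let $\hat a\in H^{c}(M,\ZZ)$ be the Poincar\'e dual of $a$. Exactly as in Lemma \ref{lem:C1}, I would build a map $f\colon M\to S^{c}$ skeleton by skeleton so that $f^{*}([S^{c}])=N\cdot\hat a$: send the $(c-1)$-skeleton to a base point, prescribe the degrees on the $c$-cells from a cocycle representing $\hat a$, and then try to extend over cells of dimension $c+1,c+2,\dots$. The obstruction to extending over a $(c+j)$-cell lies in $\pi_{c+j-1}(S^{c})$, and by Serre's theorem \cite{Se} every such group is finite \emph{except} $\pi_{2c-1}(S^{c})\cong\ZZ\oplus(\text{finite})$, encountered when extending over the $2c$-cells. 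Multiplying $a$ by a large integer kills all the finite obstructions, precisely as before.

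Two regimes then arise. If $2c>n$, that is $n>2k$, then $M$ has no $2c$-cells, the offending group is never met, and the Lemma \ref{lem:C1} argument already produces an embedded submanifold with trivial (indeed framed) normal bundle; this recovers the sub-case $n\ge 2k+1$. The substantive case is $2c\le n$, i.e. $n\le 2k$, where the free part of $\pi_{2c-1}(S^{c})$, detected by the Hopf invariant, genuinely obstructs the extension. This is the main obstacle. Being a free class it \emph{cannot} be removed by multiplying $a$: indeed, if a genuine map $f$ existed we would have $N^{2}\,\hat a\cup\hat a=f^{*}([S^{c}]\cup[S^{c}])=0$, so the free obstruction is represented by $N^{2}\,\hat a\cup\hat a=\PD(N^{2}(a\cdot a))$, which is exactly the self-intersection obstruction recalled before the statement, vanishing for embeddings if and only if $a\cdot a=0$.

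The resolution is to drop from embeddings to immersions. Over a regular neighbourhood of the $(2c-1)$-skeleton the map $f$ already yields an embedded framed submanifold with boundary; over each $2c$-cell I would close it up not by an embedded but by an \emph{immersed} framed submanifold, using that the Hopf invariant is not an invariant of framed \emph{immersed} bordism, being changed by transverse double points, so a generic immersed filling realizing the required Hopf number always exists. Conceptually this says that the correct Pontryagin--Thom target is not $S^{c}$ but $QS^{c}=\Omega^{\infty}\Sigma^{\infty}S^{c}$, whose positive-dimensional homotopy groups are the stable stems $\pi^{s}_{j}$, all finite for $j\ge 1$; in the immersed theory every higher obstruction is therefore torsion and killed by one further multiplication of $a$. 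The preimage of a regular value of the resulting classifying map is a closed immersed $i\colon C\to M$ whose normal bundle is framed, hence trivial, with $C$ oriented because $M$ is; since the double points do not affect the primary class, $i_{*}[C]=N\cdot a$, and a last general-position perturbation makes the self-intersections transverse. The delicate point to verify carefully is precisely that a double point alters the framed bordism class of the filling by the corresponding Hopf invariant — this is what lets an immersion absorb the obstruction that an embedding cannot.
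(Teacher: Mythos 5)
Your route is genuinely different from the paper's. The paper does not attack the even-codimension case by obstruction theory into $S^{c}$ at all: it forms $M\times\RR$, where the codimension $(n+1)-k$ is odd, applies Lemma \ref{lem:C1} there to obtain an \emph{embedded} $C\subset M\times\RR$ with trivial normal bundle and $[C]=N\cdot a$, and then invokes the Rourke--Sanderson compression theorem \cite{RoSa} to isotope $C$ so that the vertical field $\frac{\bd}{\bd t}$ becomes normal; projecting to $M$ then gives the desired immersion with trivial normal bundle. This is short, uses only Lemma \ref{lem:C1} plus one quotable theorem, and is exactly the ``elementary method'' the appendix announces. What you propose is instead the Sullivan--Wells argument that the paper explicitly mentions in the paragraph preceding Lemma \ref{lem:C1} (``according to Sullivan \cite{Sul}, using Thom's method and the thesis of Wells \cite{We}\dots'') and then deliberately sets aside. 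Your diagnosis of the difficulty is correct and well put: the only non-finite obstruction group is $\pi_{2c-1}(S^{c})$ for $c$ even, its free part is detected by the Hopf invariant and cannot be killed by multiplying $a$, and it is Poincar\'e dual to the self-intersection $a\cdot a$, matching Theorem \ref{thm:self-intersection}. Your observation that for $n\geq 2k+1$ the Lemma \ref{lem:C1} argument already goes through unchanged is also correct.

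The gap is in the step where you ``drop from embeddings to immersions.'' The assertion that a transverse double point changes the framed bordism class of a filling by the Hopf invariant, so that ``a generic immersed filling realizing the required Hopf number always exists,'' is precisely the content of Wells' classification of framed immersions by $[M,QS^{c}]$ (equivalently, the fact that the Whitehead product class generating the free part of $\pi_{2c-1}(S^{c})$ dies under suspension into $\pi^{s}_{c-1}$). As written it is an unproved claim carrying the entire weight of the lemma, not a detail to be ``verified carefully'' at the end. Moreover your construction is a hybrid that does not quite cohere: you run the obstruction theory with target $S^{c}$ up through the $(2c-1)$-skeleton, switch to geometric immersed fillings on the $2c$-cells, and then appeal to the homotopy of $QS^{c}$ for the higher cells --- but once the filling over a $2c$-cell is only an immersion, there is no map to $S^{c}$ anymore, so the subsequent obstructions cannot be computed in $\pi_{*}(S^{c})$; you must commit to the target $QS^{c}$ (or a finite approximation $\Omega^{m}S^{c+m}$) from the outset and invoke the immersion Pontryagin--Thom theorem to convert the resulting homotopy class back into an immersed framed $C$. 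That is a legitimate proof, but it rests on \cite{We} as a black box; the paper's $M\times\RR$ trick replaces all of this with one application of the compression theorem. If you want to keep your approach, state Wells' theorem explicitly as the input and reorganize the obstruction theory around $QS^{c}$; otherwise, the stabilize-and-compress argument is the cleaner path.
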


\begin{proof}
We consider $M\x \RR$, which is an $(n+1)$-manifold. It is open, but
the proof of lemma \ref{lem:C1} works for it and for the homology
class $a\in H_k(M\x \RR,\ZZ)\cong H_k(M,\ZZ)$. Note that $(n+1)-k$
is odd, so lemma \ref{lem:C1} guarantees the existence of a smooth
$k$-dimensional submanifold $C\subset M\times \RR$ with trivial
normal bundle, and such that $[C]=N\cdot a$, for some $N\geq 1$.

Denote by $j:C\hookrightarrow M\times \RR$ the inclusion, and let
$\pi: M\times \RR \to M$ be the projection into the first factor.
Denote by $t$ is the coordinate of the $\RR$ direction, and by
$\frac{\bd}{\bd t}$ the vertical vector field. Fixing a non-zero
normal vector field $X$ to $C\subset M\x \RR$, the compression
theorem in \cite{RoSa} allows to isotop the pair $(j,X)$ to
$(j',\frac{\bd}{\bd t})$, where $j': C\hookrightarrow M\x \RR$ is an
embedding and $\frac{\bd}{\bd t}$ becomes a normal vector field to
$j'(C)$. Therefore the composition $i=\pi\circ j': C\to M$ is an immersion.
Clearly, $i_*[C]=\pi_*j'_*[C] =\pi_*[C] = \pi_*(N \cdot
a) = N \cdot a\in H_k(M, \RR )$ and the normal bundle to $C$ in $M$
is trivial.
\end{proof}

The precise result that we use in section \ref{sec:k-solenoids} is the
following:

\begin{proposition}\label{prop:appendix-c:thom}
Let $M$ be a compact manifold of dimension $n$, and let $b_k=\dim
H_k(M,\RR)$. Then we may take a collection $C_1,\ldots, C_{b_k}\in
H_k(M,{\ZZ})$ which forms a basis of $H_k(M,{\QQ})$ and such that
$C_i$ are represented by immersed submanifolds $S_i\subset M$ with
trivial normal bundle and self-transverse intersections, and such
that $S_i$ intersects $S_j$ transversally. Moreover, if $n\geq
2k+1$, we may assume that there are no either intersections or
self-intersections.
\end{proposition}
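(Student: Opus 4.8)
The plan is to assemble the statement from the two representation lemmas together with a general-position argument, so that the only genuine content beyond Lemmas \ref{lem:C1} and \ref{lem:C2} is a transversality perturbation. First I would fix integral classes $a_1,\ldots,a_{b_k}\in H_k(M,\ZZ)$ whose images in $H_k(M,\QQ)$ form a basis; this is possible because $H_k(M,\QQ)\cong H_k(M,\ZZ)\otimes\QQ$, so any rational basis can be cleared of denominators to land in the image of the integral lattice.

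Next, to each $a_i$ I would apply Lemma \ref{lem:C1} when $n-k$ is odd and Lemma \ref{lem:C2} when $n-k$ is even. In either case this yields a positive integer $N_i$ and an immersion $f_i\colon S_i\to M$ of a compact oriented $k$-manifold whose normal bundle is trivial and with $(f_i)_*[S_i]=N_i\cdot a_i$. Setting $C_i=N_i\,a_i$, the classes $C_1,\ldots,C_{b_k}$ are still a basis of $H_k(M,\QQ)$, since each is a nonzero rational multiple of a basis vector. (In the odd-codimension case Lemma \ref{lem:C1} in fact produces embeddings, so there are no self-intersections to begin with.)

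The third step is to achieve the required transversality. I would invoke the multijet version of Thom's transversality theorem to perturb the finite family $(f_i)$, within the open set of immersions, so that every $f_i$ is self-transverse and any two distinct $f_i,f_j$ meet transversally. A sufficiently $C^\infty$-small perturbation leaves each $f_i$ an immersion, does not alter its homology class, and keeps its normal bundle isomorphic to the original one---hence still trivial---because such a perturbation is a regular homotopy.

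Finally, for the \emph{moreover} clause, when $n\geq 2k+1$ a dimension count shows $\dim S_i+\dim S_j=2k<n$ and the double-point locus of each $f_i$ has expected dimension $2k-n<0$; thus after a generic small perturbation all mutual intersections and self-intersections are empty, and the $f_i$ become disjoint embeddings. The main point requiring care is that the transversality perturbations preserve triviality of the normal bundle, which is exactly why I emphasize that these perturbations are regular homotopies; everything else is standard general position and bookkeeping.
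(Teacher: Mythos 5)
Your proposal is correct and follows essentially the same route as the paper: invoke Lemma \ref{lem:C1} or Lemma \ref{lem:C2} according to the parity of $n-k$ to represent (multiples of) an integral basis of $H_k(M,\QQ)$ by immersions with trivial normal bundle, then apply a small general-position perturbation to achieve transversality, with the dimension count $2k<n$ eliminating all intersections when $n\geq 2k+1$. The paper's own proof is just a terser version of this; your added care about clearing denominators and about the perturbation being a regular homotopy preserving the normal bundle is consistent with, and slightly more explicit than, what the authors wrote.
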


\begin{proof}
 Using lemma \ref{lem:C1} or lemma \ref{lem:C2} (according to the
 parity or $n-k$),  we may find a collection of
 immersed oriented compact
 submanifolds $S_i$ with trivial normal bundle representing
 a basis for the rational homology $H_k(M,{\QQ})$.

 Now a small perturbation of each $S_i$ makes all intersections of $S_i$ with
 $S_j$, $i\neq j$, and all self-intersections of $S_i$, transverse. If
 $n\geq 2k+1$, the transversality of the intersections implies that there are
 no intersections at all. So the result follows.
\end{proof}

\end{document}